\newtheorem{thm}{Theorem}
\newtheorem{cor}[thm]{Corollary}
\newtheorem{lem}[thm]{Lemma}
\newtheorem{prop}[thm]{Proposition}
\theoremstyle{definition}
\theoremstyle{remark}
\newtheorem{rem}[thm]{Remark}
\renewcommand{\eqref}[1]{(\ref{#1})}
\newcommand{\nc}{\newcommand}
\nc{\renc}{\renewcommand}
\nc{\ssec}{\subsection}
\nc{\sssec}{\subsubsection}
\nc{\on}{\operatorname}
\nc{\term}[1]{#1\xspace}
\DeclareMathSymbol{A}{\mathalpha}{operators}{`A}
\DeclareMathSymbol{B}{\mathalpha}{operators}{`B}
\DeclareMathSymbol{C}{\mathalpha}{operators}{`C}
\DeclareMathSymbol{D}{\mathalpha}{operators}{`D}
\DeclareMathSymbol{E}{\mathalpha}{operators}{`E}
\DeclareMathSymbol{F}{\mathalpha}{operators}{`F}
\DeclareMathSymbol{G}{\mathalpha}{operators}{`G}
\DeclareMathSymbol{H}{\mathalpha}{operators}{`H}
\DeclareMathSymbol{I}{\mathalpha}{operators}{`I}
\DeclareMathSymbol{J}{\mathalpha}{operators}{`J}
\DeclareMathSymbol{K}{\mathalpha}{operators}{`K}
\DeclareMathSymbol{L}{\mathalpha}{operators}{`L}
\DeclareMathSymbol{M}{\mathalpha}{operators}{`M}
\DeclareMathSymbol{N}{\mathalpha}{operators}{`N}
\DeclareMathSymbol{O}{\mathalpha}{operators}{`O}
\DeclareMathSymbol{P}{\mathalpha}{operators}{`P}
\DeclareMathSymbol{Q}{\mathalpha}{operators}{`Q}
\DeclareMathSymbol{R}{\mathalpha}{operators}{`R}
\DeclareMathSymbol{S}{\mathalpha}{operators}{`S}
\DeclareMathSymbol{T}{\mathalpha}{operators}{`T}
\DeclareMathSymbol{U}{\mathalpha}{operators}{`U}
\DeclareMathSymbol{V}{\mathalpha}{operators}{`V}
\DeclareMathSymbol{W}{\mathalpha}{operators}{`W}
\DeclareMathSymbol{X}{\mathalpha}{operators}{`X}
\DeclareMathSymbol{Y}{\mathalpha}{operators}{`Y}
\DeclareMathSymbol{Z}{\mathalpha}{operators}{`Z}
\nc{\sA}{\ensuremath{\mathcal{A}}\xspace}
\nc{\sB}{\ensuremath{\mathcal{B}}\xspace}
\nc{\sC}{\ensuremath{\mathcal{C}}\xspace}
\nc{\sD}{\ensuremath{\mathcal{D}}\xspace}
\nc{\sE}{\ensuremath{\mathcal{E}}\xspace}
\nc{\sF}{\ensuremath{\mathcal{F}}\xspace}
\nc{\sG}{\ensuremath{\mathcal{G}}\xspace}
\nc{\sH}{\ensuremath{\mathcal{H}}\xspace}
\nc{\sI}{\ensuremath{\mathcal{I}}\xspace}
\nc{\sJ}{\ensuremath{\mathcal{J}}\xspace}
\nc{\sK}{\ensuremath{\mathcal{K}}\xspace}
\nc{\sL}{\ensuremath{\mathcal{L}}\xspace}
\nc{\sM}{\ensuremath{\mathcal{M}}\xspace}
\nc{\sN}{\ensuremath{\mathcal{N}}\xspace}
\nc{\sO}{\ensuremath{\mathcal{O}}\xspace}
\nc{\sP}{\ensuremath{\mathcal{P}}\xspace}
\nc{\sQ}{\ensuremath{\mathcal{Q}}\xspace}
\nc{\sR}{\ensuremath{\mathcal{R}}\xspace}
\nc{\sS}{\ensuremath{\mathcal{S}}\xspace}
\nc{\sT}{\ensuremath{\mathcal{T}}\xspace}
\nc{\sU}{\ensuremath{\mathcal{U}}\xspace}
\nc{\sV}{\ensuremath{\mathcal{V}}\xspace}
\nc{\sW}{\ensuremath{\mathcal{W}}\xspace}
\nc{\sX}{\ensuremath{\mathcal{X}}\xspace}
\nc{\sY}{\ensuremath{\mathcal{Y}}\xspace}
\nc{\sZ}{\ensuremath{\mathcal{Z}}\xspace}
\nc{\bA}{\ensuremath{\mathbf{A}}\xspace}
\nc{\bB}{\ensuremath{\mathbf{B}}\xspace}
\nc{\bC}{\ensuremath{\mathbf{C}}\xspace}
\nc{\bD}{\ensuremath{\mathbf{D}}\xspace}
\nc{\bE}{\ensuremath{\mathbf{E}}\xspace}
\nc{\bF}{\ensuremath{\mathbf{F}}\xspace}
\nc{\bG}{\ensuremath{\mathbf{G}}\xspace}
\nc{\bH}{\ensuremath{\mathbf{H}}\xspace}
\nc{\bI}{\ensuremath{\mathbf{I}}\xspace}
\nc{\bJ}{\ensuremath{\mathbf{J}}\xspace}
\nc{\bK}{\ensuremath{\mathbf{K}}\xspace}
\nc{\bL}{\ensuremath{\mathbf{L}}\xspace}
\nc{\bM}{\ensuremath{\mathbf{M}}\xspace}
\nc{\bN}{\ensuremath{\mathbf{N}}\xspace}
\nc{\bO}{\ensuremath{\mathbf{O}}\xspace}
\nc{\bP}{\ensuremath{\mathbf{P}}\xspace}
\nc{\bQ}{\ensuremath{\mathbf{Q}}\xspace}
\nc{\bR}{\ensuremath{\mathbf{R}}\xspace}
\nc{\bS}{\ensuremath{\mathbf{S}}\xspace}
\nc{\bT}{\ensuremath{\mathbf{T}}\xspace}
\nc{\bU}{\ensuremath{\mathbf{U}}\xspace}
\nc{\bV}{\ensuremath{\mathbf{V}}\xspace}
\nc{\bW}{\ensuremath{\mathbf{W}}\xspace}
\nc{\bX}{\ensuremath{\mathbf{X}}\xspace}
\nc{\bY}{\ensuremath{\mathbf{Y}}\xspace}
\nc{\bZ}{\ensuremath{\mathbf{Z}}\xspace}
\nc{\dA}{\ensuremath{\mathds{A}}\xspace}
\nc{\dB}{\ensuremath{\mathds{B}}\xspace}
\nc{\dC}{\ensuremath{\mathds{C}}\xspace}
\nc{\dD}{\ensuremath{\mathds{D}}\xspace}
\nc{\dE}{\ensuremath{\mathds{E}}\xspace}
\nc{\dF}{\ensuremath{\mathds{F}}\xspace}
\nc{\dG}{\ensuremath{\mathds{G}}\xspace}
\nc{\dH}{\ensuremath{\mathds{H}}\xspace}
\nc{\dI}{\ensuremath{\mathds{I}}\xspace}
\nc{\dJ}{\ensuremath{\mathds{J}}\xspace}
\nc{\dK}{\ensuremath{\mathds{K}}\xspace}
\nc{\dL}{\ensuremath{\mathds{L}}\xspace}
\nc{\dM}{\ensuremath{\mathds{M}}\xspace}
\nc{\dN}{\ensuremath{\mathds{N}}\xspace}
\nc{\dO}{\ensuremath{\mathds{O}}\xspace}
\nc{\dP}{\ensuremath{\mathds{P}}\xspace}
\nc{\dQ}{\ensuremath{\mathds{Q}}\xspace}
\nc{\dR}{\ensuremath{\mathds{R}}\xspace}
\nc{\dS}{\ensuremath{\mathds{S}}\xspace}
\nc{\dT}{\ensuremath{\mathds{T}}\xspace}
\nc{\dU}{\ensuremath{\mathds{U}}\xspace}
\nc{\dV}{\ensuremath{\mathds{V}}\xspace}
\nc{\dW}{\ensuremath{\mathds{W}}\xspace}
\nc{\dX}{\ensuremath{\mathds{X}}\xspace}
\nc{\dY}{\ensuremath{\mathds{Y}}\xspace}
\nc{\dZ}{\ensuremath{\mathds{Z}}\xspace}
\nc{\bbA}{\ensuremath{\mathbb{A}}\xspace}
\nc{\bbB}{\ensuremath{\mathbb{B}}\xspace}
\nc{\bbC}{\ensuremath{\mathbb{C}}\xspace}
\nc{\bbD}{\ensuremath{\mathbb{D}}\xspace}
\nc{\bbE}{\ensuremath{\mathbb{E}}\xspace}
\nc{\bbF}{\ensuremath{\mathbb{F}}\xspace}
\nc{\bbG}{\ensuremath{\mathbb{G}}\xspace}
\nc{\bbH}{\ensuremath{\mathbb{H}}\xspace}
\nc{\bbI}{\ensuremath{\mathbb{I}}\xspace}
\nc{\bbJ}{\ensuremath{\mathbb{J}}\xspace}
\nc{\bbK}{\ensuremath{\mathbb{K}}\xspace}
\nc{\bbL}{\ensuremath{\mathbb{L}}\xspace}
\nc{\bbM}{\ensuremath{\mathbb{M}}\xspace}
\nc{\bbN}{\ensuremath{\mathbb{N}}\xspace}
\nc{\bbO}{\ensuremath{\mathbb{O}}\xspace}
\nc{\bbP}{\ensuremath{\mathbb{P}}\xspace}
\nc{\bbQ}{\ensuremath{\mathbb{Q}}\xspace}
\nc{\bbR}{\ensuremath{\mathbb{R}}\xspace}
\nc{\bbS}{\ensuremath{\mathbb{S}}\xspace}
\nc{\bbT}{\ensuremath{\mathbb{T}}\xspace}
\nc{\bbU}{\ensuremath{\mathbb{U}}\xspace}
\nc{\bbV}{\ensuremath{\mathbb{V}}\xspace}
\nc{\bbW}{\ensuremath{\mathbb{W}}\xspace}
\nc{\bbX}{\ensuremath{\mathbb{X}}\xspace}
\nc{\bbY}{\ensuremath{\mathbb{Y}}\xspace}
\nc{\bbZ}{\ensuremath{\mathbb{Z}}\xspace}
\nc{\mrm}[1]{\ensuremath{\mathrm{#1}}\xspace}
\nc{\mbf}[1]{\ensuremath{\mathbf{#1}}\xspace}
\nc{\mcal}[1]{\ensuremath{\mathcal{#1}}\xspace}
\nc{\msc}[1]{\ensuremath{\mathscr{#1}}\xspace}
\renc{\bar}[1]{\overline{#1}}
\let\sectsign\S
\let\S\relax
\nc{\sub}{\subset}
\nc{\too}{\longrightarrow}
\nc{\hook}{\hookrightarrow}
\nc*{\hooklongrightarrow}{\ensuremath{\lhook\joinrel\relbar\joinrel\rightarrow}}
\nc{\hooklong}{\hooklongrightarrow}
\nc{\twoheadlongrightarrow}{\relbar\joinrel\twoheadrightarrow}
\nc{\shiso}{\approx}
\nc{\isoto}{\xrightarrow{\sim}}
\nc{\isofrom}{\xleftarrow{\sim}}
\renc{\ge}{\geqslant}
\renc{\le}{\leqslant}
\renc{\geq}{\geqslant}
\renc{\leq}{\leqslant}
\nc{\id}{\mathrm{id}}
\DeclareMathOperator{\Hom}{\mathrm{Hom}}
\nc{\uHom}{\underline{\smash{\Hom}}}
\DeclareMathOperator{\Maps}{\mathrm{Maps}}
\DeclareMathOperator{\End}{\mathrm{End}}
\nc{\PSh}{\mathrm{PSh}{}}
\nc{\uEnd}{\underline{\smash{\End}}}
\renc{\lim}{\operatorname*{lim}}
\nc{\colim}{\operatorname*{colim}}
\nc{\Cofib}{\on{Cofib}}
\nc{\Fib}{\on{Fib}}
\nc{\initial}{\varnothing}
\nc{\op}{\mathrm{op}}
\renc{\coprod}{\sqcup}
\nc{\bDelta}{\mbf{\Delta}}
\nc{\DM}{\mbf{DM}}
\nc{\eff}{\mathrm{eff}}
\nc{\veff}{\mathrm{veff}}
\nc{\cyc}{{\mrm{cyc}}}
\nc{\corr}{{\on{corr}}}
\nc{\fet}{{\mrm{f\acute et}}}
\nc{\fsyn}{{\mrm{fsyn}}}
\nc{\syn}{{\mrm{syn}}}
\nc{\Perf}{\mbf{Perf}}
\nc{\perf}{\on{perf}}
\nc{\oblv}{\on{oblv}}
\nc{\exact}{\on{exact}}
\nc{\F}{{\on{F}}}
\nc{\clopen}{{\mrm{clopen}}}
\nc{\B}{\mrm{B}}
\nc{\D}{\mrm{D}}
\nc{\Fin}{\on{Fin}}
\nc{\Cut}{\on{Cut}}
\nc{\Cart}{\on{Cart}}
\nc{\pairs}{\mathsf{pairs}}
\nc{\Pairs}{\mathrm{Pair}}
\nc{\Trip}{\mathrm{Trip}}
\nc{\Lab}{\mathrm{Lab}}
\nc{\coCart}{\mathrm{coCart}}
\nc{\RKE}{\mathrm{RKE}}
\nc{\strict}{\mathrm{strict}}
\nc{\Emb}{\mathrm{Emb}}
\nc{\Split}{\mathrm{Split}}
\nc{\Set}{\mathrm{Set}}
\nc{\sSets}{\mathrm{sSets}}
\nc{\pb}{\mathrm{pb}}
\nc{\fib}{\mathrm{fib}}
\nc{\diff}{\mrm{diff}}
\nc{\gp}{\mrm{gp}}
\nc{\chr}{\mrm{char}}
\nc{\mgp}{\mrm{mot-gp}}
\nc{\FSyn}{\mrm{FSyn}}
\nc{\FEt}{\mrm{FEt}}
\nc{\Spc}{\mrm{Spc}}
\nc{\Ob}{\mrm{Ob}}
\nc{\Spt}{\mrm{Spt}}
\nc{\T}{\bT}
\nc{\suspinf}{\Sigma^\infty}
\nc{\h}{\mrm{h}}
\nc{\uhom}{\underline{\mathrm{Hom}}}
\nc{\umap}{\underline{\mathrm{Maps}}}
\renc{\H}{\bH}
\nc{\Einfty}{{\sE_\infty}}
\nc{\Eone}{{\sE_1}}
\nc{\Stab}{\mrm{Stab}}
\nc{\lax}{{\mrm{lax}}}
\nc{\cocart}{{\mrm{cocart}}}
\nc{\Sch}{\on{Sch}}
\nc{\Fr}{\on{Fr}}
\nc{\A}{\mathbf{A}}
\nc{\N}{\mathbf{N}}
\nc{\Z}{\mathbf{Z}}
\nc{\Q}{\mathbf{Q}}
\nc{\Oo}{\mathcal{O}} 
\nc{\red}{{\on{red}}}
\nc{\Voev}{{\on{Voev}}}
\nc{\Corr}{\mrm{Corr}}
\nc{\Span}{\mathbf{Corr}}
\nc{\Gap}{\mrm{Gap}}
\nc{\Corrfr}{\Corr^{\fr}}
\nc{\Corrvfr}{\Corr^{\Vfr}}
\nc{\Spec}{\on{Spec}}
\nc{\Sm}{\on{Sm}}
\nc{\Gm}{\mathbf{G}_{\on{m}}}
\renc{\P}{\bP}
\nc{\nis}{\mathrm{nis}}
\nc{\zar}{\mathrm{zar}}
\nc{\et}{\mathrm{\acute et}}
\nc{\all}{\mathrm{all}}
\nc{\fold}{\mathrm{fold}}
\nc{\Fun}{\mathrm{Fun}}
\nc{\Ho}{\mathrm{Ho}}
\nc{\Segal}{\mathrm{Segal}}
\nc{\Mon}{\mrm{Mon}{}}
\nc{\Ab}{\mrm{Ab}}
\nc{\Sh}{\on{Sh}}
\nc{\M}{\mrm{M}}
\nc{\Lhtp}{L_{\A^1}}
\nc{\Lmot}{L_{\mrm{mot}}}
\nc{\mot}{\mrm{mot}}
\nc{\SH}{\mbf{SH}}
\nc{\RR}{\mbf{R}}
\nc{\CC}{\mbf{C}}
\nc{\Mod}{\mbf{Mod}}
\nc{\QCoh}{\mbf{QCoh}}
\nc{\MonUnit}{\mbf{1}}
\nc{\tr}{\on{tr}}
\nc{\vop}{\mrm{vop}}
\nc{\fr}{{\on{fr}}}
\nc{\Ar}{\mrm{Ar}}
\nc{\Vfr}{\on{Vfr}}
\nc{\frdiff}{{\on{frdiff}}}
\nc{\frGys}{\on{frGys}}
\nc{\SHfr}{\SH^{\fr}}
\nc{\SHfrdiff}{\SH^{\frdiff}}
\nc{\SHfrGys}{\SH^{\frGys}}
\nc{\InftyCat}{\infty\textnormal{-}\mrm{Cat}}
\nc{\TriCat}{\mathrm{TriCat}}
\nc{\Cat}{\mathrm{1\textnormal{-}Cat}}
\nc{\Th}{\on{Th}}
\nc{\CMon}{\mrm{CMon}{}}
\nc{\MGL}{\mrm{MGL}}
\nc{\Seg}{\mrm{Seg}{}}
\nc{\Tw}{\mrm{Tw}}
\nc{\sslash}{/\mkern-6mu/}
\nc{\PrL}{\mrm{Pr}^\mrm{L}}
\nc{\PrR}{\mrm{Pr}^\mrm{R}}
\nc{\pr}{\mrm{pr}}
\nc{\efr}{\mrm{efr}}
\nc{\nfr}{\mrm{nfr}}
\nc{\dfr}{\mrm{fr}}
\nc{\tfr}{\mrm{tfr}}
\nc{\Vect}{\mrm{Vect}}
\nc{\sVect}{\mrm{sVect}}
\nc{\fix}{\mrm{fix}}
\nc{\Hilb}{\mathrm{Hilb}}
\nc{\flci}{\mathrm{flci}}
\nc{\Isom}{\mathrm{Isom}}
\nc{\GL}{\mathrm{GL}}
\nc{\fin}{\mathrm{fin}}
\nc{\KGL}{\mrm{KGL}}
\nc{\KO}{\mrm{KO}}
\nc{\ko}{\mathit{ko}}
\nc{\spi}{\underline{\pi}}
\let\phi\varphi
\let\emptyset\varnothing
\nc{\inftyCat}{\term{$\infty$-category}}
\nc{\inftyCats}{\term{$\infty$-categories}}
\nc{\inftyOneCat}{\term{$(\infty,1)$-category}}
\nc{\inftyOneCats}{\term{$(\infty,1)$-categories}}
\nc{\inftyGrpd}{\term{$\infty$-groupoid}}
\nc{\inftyGrpds}{\term{$\infty$-groupoids}}
\nc{\inftyTop}{\term{$\infty$-topos}}
\nc{\inftyTops}{\term{$\infty$-toposes}}
\nc{\inftyTwoCat}{\term{$(\infty,2)$-category}}
\nc{\inftyTwoCats}{\term{$(\infty,2)$-categories}}
\title{The localization theorem for framed motivic spaces}
\author{Marc Hoyois}
\address{Fakultät für Mathematik\\
Universität Regensburg\\
93040 Regensburg\\
Germany}
\email{\href{mailto:marc.hoyois@ur.de}{marc.hoyois@ur.de}}
\urladdr{\url{http://www.mathematik.ur.de/hoyois/}}
\thanks{The author was partially supported by NSF grant DMS-1761718}
\date{\today}
\begin{document}

\begin{abstract}
We prove the analog of the Morel–Voevodsky localization theorem for framed motivic spaces. We deduce that framed motivic spectra are equivalent to motivic spectra over arbitrary schemes, and we give a new construction of the motivic cohomology of arbitrary schemes.
\end{abstract}

\maketitle

\parskip 0pt
\tableofcontents

\parskip 0.2cm

In this article we show that the theory of framed motivic spaces introduced in \cite{EHKSY1} satisfies localization: if $i\colon Z\hook S$ is a closed immersion of schemes, $j\colon U\hook S$ is the complementary open immersion, and $\sF\in \H^\fr(S)$ is a framed motivic space over $S$, then there is a cofiber sequence
\[
j_\sharp j^*\sF \to \sF \to i_*i^*\sF
\]
(see Theorem~\ref{thm:main}).
Consequently, the theory of framed motivic spectra satisfies Ayoub's axioms \cite{Ayoub}, which implies that it admits a full-fledged formalism of six operations. Using this formalism, we show that the equivalence $\SH^\fr(S)\simeq \SH(S)$, proved in \cite{EHKSY1} for $S$ the spectrum of a perfect field, holds for any scheme $S$ (see Theorem~\ref{thm:reconstruction}). 

The $\infty$-category $\H^\fr(S)$ of framed motivic spaces consists of $\A^1$-invariant Nisnevich sheaves on the $\infty$-category $\Span^\fr(\Sm_S)$ of smooth $S$-schemes and \emph{framed correspondences}. A framed correspondence between $S$-schemes $X$ and $Y$ is a span
\begin{equation*}
  \begin{tikzcd}
     & Z \ar[swap]{ld}{f}\ar{rd} & \\
    X &   & Y
  \end{tikzcd}
\end{equation*}
over $S$, where $f$ is a finite syntomic morphism equipped with a trivialization of its cotangent complex in the $K$-theory of $Z$.
 Our result stands in contrast to the case of finite correspondences in the sense of Voevodsky, where the analog of the Morel–Voevodsky localization theorem remains unknown.
The essential ingredient in our proof is the fact that the Hilbert scheme of framed points \cite[Definition 5.1.7]{EHKSY1} is \emph{smooth}.

\section{Review of the Morel–Voevodsky localization theorem}

We start by reviewing the localization theorem of Morel and Voevodsky \cite[\sectsign 3 Theorem 2.21]{MV}. 
We refer to \cite[Appendix C]{HoyoisGLV} for the definition of the Morel–Voevodsky $\infty$-category $\H(S)$ for $S$ an arbitrary scheme.
We shall denote by $L_\nis$, $\Lhtp$, and $\Lmot$ the localization functors enforcing Nisnevich descent, $\A^1$-invariance, and both, respectively. 

\begin{thm}[Morel–Voevodsky]
	\label{thm:MV}
	Let $i\colon Z\hook S$ be a closed immersion of schemes, $j\colon U\hook S$ the complementary open immersion, and $\sF\in\H(S)$ a motivic space over $S$. Then the square
	\[
	\begin{tikzcd}
		j_\sharp j^*\sF \ar{r} \ar{d} & \sF \ar{d} \\
		j_\sharp(*) \ar{r} & i_*i^*\sF
	\end{tikzcd}
	\]
	is coCartesian in $\H(S)$.
\end{thm}

This theorem was proved in this generality in \cite[Proposition C.10]{HoyoisGLV}, but we give here a more direct proof that was alluded to in \emph{loc}.\ \emph{cit}. In the sequel, we will actually use a slightly different form of this theorem, see Corollary~\ref{cor:A1-hensel} below.

Let $i\colon Z\hook S$ be a closed immersion with open complement $j\colon U\hook S$. For an $S$-scheme $X$ and an $S$-morphism $t\colon Z\to X$, we define the presheaf
\[
\h_S(X,t)\colon \Sch_S^\op\to\Set
\]
by the Cartesian square
\[
\begin{tikzcd}
	\h_S(X,t) \ar{r}\ar{d} & * \ar{d}{t} \\
	\h_S(X) \sqcup_{\h_S(X_U)} \h_S(U) \ar{r} & i_*\h_Z(X_Z),
\end{tikzcd}
\]
where $\h_S\colon \Sch_S\to \PSh(\Sch_S)$ is the Yoneda embedding.
Explicitly:
\[
\h_S(X,t)(Y) = \begin{cases}
	\Maps_S(Y,X) \times_{\Maps_Z(Y_Z,X_Z)} \{Y_Z\to Z\xrightarrow t X_Z\} & \text{if $Y_Z\neq\emptyset$,} \\
	* & \text{if $Y_Z=\emptyset$.}
\end{cases}
\]
If $S$ is a Henselian local scheme, we have the following well-known facts:
\begin{itemize}
	\item[(a)] If $X$ is étale over $S$, then $\h_S(X,t)(S)$ is contractible.
	\item[(b)] If $X$ is smooth over $S$, then $\h_S(X,t)(S)$ is connective (i.e., not empty).
\end{itemize}
Both assertions hold by definition of $\h_S(X,t)$ if $Z=\emptyset$. Otherwise, $(S,Z)$ is an affine Henselian pair where $Z$ has a unique closed point, so we can assume $X$ affine. Assertion (a) is then a special case of \cite[Proposition 18.5.4]{EGA4-4}, and assertion (b) is a special case of \cite[Théorème I.8]{Gruson}. For general $S$, it follows immediately that:
\begin{itemize}
	\item[(a$'$)] If $X$ is étale over $S$, then $L_\nis\h_S(X,t)$ is contractible.
	\item[(b$'$)] If $X$ is smooth over $S$, then $L_\nis\h_S(X,t)$ is connective.
\end{itemize}
Assertion (b$'$) is an abstract version of Hensel's lemma in several variables. The crux of the Morel–Voevodsky localization theorem is a refinement of (b$'$) asserting that the motivic localization $\Lmot \h_S(X,t)$ is contractible. 

\begin{lem}\label{lem:A1-hensel}
	Let $f\colon X\to Y$ be a morphism of locally finitely presented $S$-schemes that is étale in a neighborhood of $t(Z)$. Then the induced map $\h_S(X,t)\to \h_S(Y,f\circ t)$ is a Nisnevich-local isomorphism.
\end{lem}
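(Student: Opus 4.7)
The plan is to verify the isomorphism on Nisnevich stalks. It suffices to show that for every Henselian local $S$-scheme $W$, the induced map $\h_S(X,t)(W)\to \h_S(Y,f\circ t)(W)$ is a bijection of sets. The case $W_Z := W\times_S Z = \emptyset$ is immediate, since both sides are singletons by the explicit formula defining $\h_S(-,-)$. In the remaining case, $W_Z$ is a nonempty closed subscheme of the local scheme $W$ and therefore contains the closed point of $W$.

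Now I would fix an element $g\colon W\to Y$ of the target and identify the fiber over $g$. By definition the fiber is the set of $S$-morphisms $h\colon W\to X$ satisfying $f\circ h=g$ and $h|_{W_Z}=t\circ\pi$, where $\pi\colon W_Z\to Z$ is the canonical projection; equivalently, it is the set of sections of the pullback $X\times_Y W\to W$ that extend the canonical section $s_Z\colon W_Z\to X\times_Y W$ determined by $t$. Letting $V\subset X$ be an open neighborhood of $t(Z)$ on which $f$ is étale, I would set $V':=V\times_Y W$, an étale $W$-scheme through which $s_Z$ factors. The key observation is then that any such section $h$ automatically factors through $V'$: its preimage of $V'$ is an open of $W$ containing $W_Z$ and hence the closed point of $W$, so it equals all of $W$.

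It follows that the fiber over $g$ is identified with $\h_W(V',s_Z)(W)$, which is contractible by property~(a) applied to the Henselian local scheme $W$, the closed immersion $W_Z\hookrightarrow W$, and the étale $W$-scheme $V'$. This yields the required bijection and completes the argument. I do not anticipate any real obstacle: the content reduces almost directly to property~(a), and the only care required is the bookkeeping that assembles the section $s_Z$ out of the data of $t$ together with the compatibility $g|_{W_Z}=f\circ t\circ\pi$ built into the definition of the target.
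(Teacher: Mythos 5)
Your proof is correct and takes essentially the same approach as the paper: reduce to the stalk at a Henselian local $S$-scheme $W$ (justified, as in the paper, by the fact that $X,Y$ locally of finite presentation makes both presheaves commute with cofiltered limits of qcqs schemes), then apply property~(a). You simply spell out the step the paper compresses into ``this follows immediately from assertion~(a)'' by computing the fiber over a point $g$ of the target as $\h_W(V',s_Z)(W)$ for $V'=V\times_Y W$ étale over $W$.
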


\begin{proof}
	Since the presheaves $\h_S(X,t)$ and $\h_S(Y,f\circ t)$ transform cofiltered limits of qcqs schemes into colimits \cite[Théorème 8.8.2(i)]{EGA4-3}, it suffices to show that the given map is an isomorphism on Henselian local schemes. Since $\h_S(X,t)(T)=\h_T(X_T,t_T)(T)$, we are reduced to proving that the map $\h_S(X,t)(S)\to \h_S(Y,f\circ t)(S)$ is an isomorphism when $S$ is Henselian local; we will show that its fibers are contractible. Let $X'\subset X$ be an open neighborhood of $t(Z)$ where $f$ is étale.
	 Given a section $s\colon S\to Y$ extending $f\circ t$, we have a Cartesian square
	\[
	\begin{tikzcd}
		\h_S(X'\times_YS,(t,i))(S) \ar{r} \ar{d} & * \ar{d}{s} \\
		\h_S(X,t)(S) \ar{r} & \h_S(Y,f\circ t)(S)\rlap.
	\end{tikzcd}
	\]
	 By assertion (a) above, $\h_S(X'\times_YS,(t,i))$ is contractible, as desired.
\end{proof}

\begin{thm}[The $\A^1$-Hensel lemma]
	\label{thm:A1-hensel}
	Let $S$ be a scheme, $Z\subset S$ a closed subscheme, $X$ an $S$-scheme, and $t\colon Z\to X$ an $S$-morphism.
	If $X$ is smooth over $S$, then $\Lmot \h_S(X,t)$ is contractible.
\end{thm}

\begin{proof}
	By Lemma~\ref{lem:A1-hensel}, we can replace $X$ by any open neighborhood of $t(Z)$ in $X$. Since the question is Nisnevich-local on $S$, we can assume that $S$ and $X$ are both affine.
	Since $L_\nis\h_S(X,t)$ is connective, we can further assume that there exists a section $s\colon S\to X$ extending $t$. Then there exists an $S$-morphism $f\colon X\to\bV(\sN_s)$, étale in a neighborhood of $s(S)$, such that $f\circ s$ is the zero section of the normal bundle $\bV(\sN_s)\to S$.
	Using Lemma~\ref{lem:A1-hensel} again, we are reduced to the case where $X\to S$ is a vector bundle and $t\colon Z\to X$ is the restriction of its zero section. In this case, an obvious $\A^1$-homotopy shows that $\Lhtp\h_S(X,t)$ is contractible.
\end{proof}

\begin{rem}
	The proof of Theorem~\ref{thm:A1-hensel} actually shows that $L_\nis\Lhtp L_\nis \h_S(X,t)\simeq *$.
\end{rem}

\begin{cor}
	\label{cor:A1-hensel}
	Let $i\colon Z\hook S$ be a closed immersion with open complement $j\colon U\hook S$. For every $\sF\in \PSh(\Sm_S)$, the square
	\[
	\begin{tikzcd}
		j_\sharp j^*\sF \ar{r} \ar{d} & \sF \ar{d} \\
		\sF(\emptyset)\times\h_S(U) \ar{r} & i_*i^*\sF
	\end{tikzcd}
	\]
	is motivically coCartesian, i.e., its motivic localization is coCartesian in $\H(S)$.
\end{cor}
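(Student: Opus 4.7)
\emph{Plan.} I will reduce to the representable case $\sF = \h_S(X)$, fiber the resulting map over the sections of $i_* \h_Z(X_Z)$, identify each fiber with a base-changed version of $\h_S(X,t)$, and then apply \thmref{thm:A1-hensel} over the base change.

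First I check that each of the four corners is a colimit-preserving functor of $\sF \in \PSh(\Sm_S)$: this is immediate for $\sF$, $j_\sharp j^*\sF$, and $\sF(\emptyset) \times \h_S(U)$, and for $i_* i^*\sF$ it holds because $i^*$ is a left adjoint and because $i_* G$ is given pointwise by $(i_* G)(Y) = G(Y_Z)$. Since $\Lmot$ and pushouts preserve colimits, the property of being motivically coCartesian is stable under colimits in $\sF$, so it suffices to treat $\sF = \h_S(X)$ with $X \in \Sm_S$. In this case $\sF(\emptyset) = *$, and the goal is to show that the canonical map
\[
\phi\colon A := \h_S(X) \sqcup_{\h_S(X_U)} \h_S(U) \too B := i_* \h_Z(X_Z)
\]
is a motivic equivalence.

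Writing $B \simeq \colim_{(Y,\tau)} \h_S(Y)$ as a colimit of representables, indexed by pairs with $Y \in \Sm_S$ and $\tau \in B(Y) = \Maps_Z(Y_Z, X_Z)$, universality of colimits in presheaves gives $A \simeq \colim_{(Y,\tau)} A \times_B \h_S(Y)$, reducing the task to showing that each map $A \times_B \h_S(Y) \to \h_S(Y)$ is a motivic equivalence.

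The key computation identifies $A \times_B \h_S(Y) \simeq \pi_\sharp \h_Y(X_Y, \tau)$, where $\pi\colon Y \to S$ is the structure morphism and $\h_Y(X_Y, \tau)$ is the obvious analogue of $\h_S(X,t)$ for the base $Y$, closed immersion $Y_Z \hookrightarrow Y$, and $Y$-morphism $\tau\colon Y_Z \to X_Y$ corresponding to $\tau$. Granting this, \thmref{thm:A1-hensel} applied over $Y$ yields $\Lmot \h_Y(X_Y, \tau) \simeq *$. Since $\pi$ is smooth, $\pi_\sharp$ preserves motivic equivalences, so $\pi_\sharp \h_Y(X_Y, \tau) \to \pi_\sharp *_Y = \h_S(Y)$ is a motivic equivalence, and assembling the colimit finishes the argument. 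The main obstacle is the fiber identification $A \times_B \h_S(Y) \simeq \pi_\sharp \h_Y(X_Y, \tau)$, a routine pointwise check distinguishing whether the test scheme meets $Z$; in particular, when it lies entirely over $U$, the pushout defining $A$ collapses to a point, matching $\pi_\sharp$ of the constant presheaf there.
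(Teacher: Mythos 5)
Your argument is correct and is essentially identical to the paper's proof: reduce to $\sF = \h_S(X)$ by colimit-preservation, write $i_*\h_Z(X_Z)$ as a colimit of representables $\h_S(T)$ indexed by pairs $(T, t)$, identify the resulting pullback $A \times_B \h_S(T)$ with $f_\sharp\h_T(X_T,t)$, and conclude via Theorem~\ref{thm:A1-hensel} and the fact that $f_\sharp$ preserves motivic equivalences. The only cosmetic difference is that you spell out the colimit-preservation of each corner and describe the fiber identification as a pointwise check (which is justified via the equivalence $\PSh(\Sm_T)\simeq\PSh(\Sm_S)_{/\h_S(T)}$), whereas the paper leaves these implicit.
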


\begin{proof}
	Since this square preserves colimits in $\sF$, we can assume that $\sF=\h_S(X)$ for some smooth $S$-scheme $X$. We must then show that the canonical map
	\[
	\h_S(X) \coprod_{\h_S(X_U)}\h_S(U) \to i_*\h_Z(X_Z)
	\]
	is a motivic equivalence in $\PSh(\Sm_S)$. In fact, we will show that it is a motivic equivalence in $\PSh(\Sch_S)$. Writing the target as a colimit of representables, it suffices to show that for every morphism $f\colon T\to S$ and every map $\h_S(T)\to i_*\h_Z(X_Z)$, corresponding to a $T$-morphism $t\colon Z_T \to X_T$, the projection
	\[
	\left(\h_S(X) \coprod_{\h_S(X_U)}\h_S(U)\right) \times_{i_*\h_Z(X_Z)} \h_S(T) \to \h_S(T)
	\]
	is a motivic equivalence. This map is the image by the functor $f_\sharp\colon \PSh(\Sch_T)\to\PSh(\Sch_S)$ of the map
	\[
	\h_T(X_T,t) \to \h_T(T)\simeq *.
	\]
	Indeed, this follows from the projection formula $f_\sharp(f^*(B)\times_{f^*(A)} C) \simeq B\times_A f_\sharp(C)$, which holds for any morphisms $B\to A$ in $\PSh(\Sch_S)$ and $C\to f^*(A)$ in $\PSh(\Sch_T)$, and the base change equivalence $f^*i_* \simeq i_{T*}f_Z^*$.
	Since $f_\sharp$ preserves motivic equivalences and $X_T$ is smooth over $T$, Theorem~\ref{thm:A1-hensel} concludes the proof.
\end{proof}

\begin{proof}[Proof of Theorem~\ref{thm:MV}]
	The functors $j_\sharp$, $j^*$, and $i^*$ between $\infty$-categories of presheaves preserve motivic equivalences, as does the functor $i_*\colon \PSh_\Sigma(\Sm_Z) \to \PSh_\Sigma(\Sm_S)$ by \cite[Proposition 2.11]{norms}. Thus, for $\sF\in\H(S)$, the given square is the motivic localization of the square of Corollary~\ref{cor:A1-hensel}.
\end{proof}

\begin{rem}
	Arguing as in the proof of Corollary~\ref{cor:main}, we can deduce from Theorem~\ref{thm:MV} that
	\[
	\H(U) \xrightarrow{j_\sharp} \H(S) \xrightarrow{i^*}\H(Z)
	\]
	is a cofiber sequence of presentable $\infty$-categories (in fact, it is also a fiber sequence).
\end{rem}

\section{The localization theorem for framed motivic spaces}

We now turn to the proof of localization for framed motivic spaces. We use the notation from \cite{EHKSY1}.

\begin{lem}\label{lem:gamma-detect}
	The forgetful functor $\gamma_*\colon \PSh_\Sigma(\Span^\fr(\Sm_S)) \to \PSh_\Sigma(\Sm_S)$ detects Nisnevich and motivic equivalences.
\end{lem}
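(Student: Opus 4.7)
The plan is to deduce the lemma from two ingredients: that $\gamma_*$ is conservative on the ambient presheaf category $\PSh_\Sigma$, and that it commutes with the Nisnevich and motivic localization functors. Once these are established, a standard two-out-of-three argument for localizations will conclude.

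Conservativity is immediate. The functor $\gamma\colon\Sm_S\to\Span^\fr(\Sm_S)$ is a bijection on objects, and equivalences in a presheaf $\infty$-category are detected pointwise; so restriction along $\gamma$ both preserves and reflects equivalences on all of $\PSh_\Sigma$.

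For the commutation, I would appeal to the setup of \cite{EHKSY1}. There the Nisnevich topology on $\Span^\fr(\Sm_S)$ is arranged precisely so that a framed presheaf $F$ is Nisnevich-local if and only if $\gamma_* F$ is Nisnevich-local on $\Sm_S$, and of course $\A^1$-invariance is a pointwise condition also preserved and reflected by $\gamma_*$. Since $\gamma_*$ is a restriction functor, hence preserves all small limits and colimits (computed pointwise in $\PSh_\Sigma$), and since $L_\nis$ admits a concrete Čech plus-construction model built from Nisnevich covers of smooth $S$-schemes (while $\Lhtp$ is a geometric realization of $\A^1$-cylinders), $\gamma_*$ intertwines $L_\nis$ and $\Lhtp$; composing gives $\gamma_*\Lmot\simeq\Lmot\gamma_*$.

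The detection step is then routine: if $\gamma_* f$ is a Nisnevich (resp.\ motivic) equivalence, then $L_\nis\gamma_* f\simeq\gamma_* L_\nis f$ (resp.\ $\Lmot\gamma_* f\simeq\gamma_*\Lmot f$) is an equivalence, so conservativity of $\gamma_*$ forces $L_\nis f$ (resp.\ $\Lmot f$) to be an equivalence, and $f$ is the corresponding equivalence in $\PSh_\Sigma(\Span^\fr(\Sm_S))$. The only genuinely non-formal input is the commutation $\gamma_* L_\nis\simeq L_\nis\gamma_*$; this is the main obstacle, but it ultimately reduces to the statement that the Nisnevich covers used to build $L_\nis$ on both sides coincide, living in $\Sm_S$, which is built into the definition of the framed Nisnevich topology in \cite{EHKSY1}.
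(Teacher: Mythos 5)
Your proposal is correct and unfolds exactly the content of the result the paper cites for this lemma, namely \cite[Proposition 3.2.14]{EHKSY1}: that $\gamma_*$ commutes with $L_\nis$ and $\Lmot$, which together with the (trivial, since $\gamma$ is a bijection on objects) conservativity of $\gamma_*$ gives detection. The paper's own proof is a one-line citation of that proposition, so you are essentially reproving the cited result rather than taking a different route; the only caveat is that your justification of the commutation $\gamma_*L_\nis\simeq L_\nis\gamma_*$ via the plus construction is sketched informally, but the idea is the correct one.
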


\begin{proof}
	This follows from \cite[Proposition 3.2.14]{EHKSY1}.
\end{proof}

\begin{prop}\label{prop:finite-pushforward}
	Let $f\colon T\to S$ be an integral morphism. Then the functor \[f_*\colon \PSh_\Sigma(\Span^\fr(\Sm_T)) \to \PSh_\Sigma(\Span^\fr(\Sm_S))\] preserves Nisnevich and motivic equivalences.
\end{prop}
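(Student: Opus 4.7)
The plan is to reduce the assertion to the analogous claim for ordinary presheaves on smooth schemes, and then invoke standard base-change properties of integral morphisms. The bridge is Lemma~\ref{lem:gamma-detect}: the forgetful functor $\gamma_*\colon \PSh_\Sigma(\Span^\fr(\Sm_-)) \to \PSh_\Sigma(\Sm_-)$ detects Nisnevich and motivic equivalences. Moreover, since the construction $S\mapsto \Span^\fr(\Sm_S)$ is natural in the base, there is a canonical identification $\gamma_* \circ f_* \simeq f_* \circ \gamma_*$ expressing the commutativity of $\gamma$ with base change along $f$.

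Given a Nisnevich or motivic equivalence $\phi$ in $\PSh_\Sigma(\Span^\fr(\Sm_T))$, its restriction $\gamma_*(\phi)$ is itself a Nisnevich or motivic equivalence in $\PSh_\Sigma(\Sm_T)$: on the framed side these localizations are defined from the Nisnevich topology and the affine line on $\Sm_T$, which $\gamma$ respects, so $\gamma_*$ preserves the relevant equivalences. To verify that $f_*\phi$ is a Nisnevich/motivic equivalence on the framed side, it therefore suffices, by the detection clause of Lemma~\ref{lem:gamma-detect}, to verify that $\gamma_* f_*\phi \simeq f_* \gamma_*\phi$ is one on the unframed side. This reduces the proposition to the statement that for $f$ integral, $f_*\colon \PSh_\Sigma(\Sm_T) \to \PSh_\Sigma(\Sm_S)$ preserves Nisnevich and motivic equivalences.

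For this unframed statement, I would use two facts about base change along an integral $f$. First, Nisnevich covers in $\Sm_S$ pull back to Nisnevich covers in $\Sm_T$, and the identity $\A^1_T = f^*\A^1_S$ holds. Second, for a Henselian local $S$-scheme $A$, the base change $A\times_S T$ is, because $f$ is integral, a cofiltered limit along affine transition maps of finite disjoint sums of Henselian local $T$-schemes. Combined with the continuity of $\Sigma$-presheaves under cofiltered limits of qcqs affines, these yield preservation of Nisnevich equivalences (checked stalkwise) and of $\A^1$-equivalences, hence of motivic equivalences.

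The main obstacle is executing this unframed step carefully: the structural claim about $A\times_S T$ for a merely integral (not necessarily finite) $f$ is what makes Nisnevich equivalences testable after base change, and it is the one technical ingredient whose verification is not formal. Modulo the unframed case, the framed conclusion is a formal consequence of Lemma~\ref{lem:gamma-detect} together with the naturality identification $\gamma_* f_* \simeq f_* \gamma_*$.
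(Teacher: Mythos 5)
Your reduction is exactly the paper's: the paper's proof of Proposition~\ref{prop:finite-pushforward} is a one-liner that invokes Lemma~\ref{lem:gamma-detect} together with the unframed fact that $f_*\colon\PSh_\Sigma(\Sm_T)\to\PSh_\Sigma(\Sm_S)$ preserves Nisnevich and motivic equivalences, citing \cite[Proposition 2.11]{norms} for the latter. You make explicit two steps the paper leaves implicit, namely the compatibility $\gamma_* f_*\simeq f_*\gamma_*$ (which holds because both sides are restriction of presheaves along the same composite of functors on underlying categories) and the fact that $\gamma_*$ also \emph{preserves} the relevant equivalences (which, like the detection clause, follows from $\gamma_*$ commuting with $L_\nis$ and $L_\mot$). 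Where you genuinely diverge is that instead of citing \cite[Proposition 2.11]{norms} you sketch a proof of the unframed statement, via the observation that for $f$ integral and $A$ a Henselian local $S$-scheme the base change $A\times_S T$ is a cofiltered limit of finite disjoint sums of Henselian local $T$-schemes, combined with continuity of finitely presented presheaves and semiadditivity. That sketch is in the right spirit and is essentially how the cited result is proved, so your proposal is correct and amounts to the same argument with the external reference unpacked; you correctly flag the structural claim about $A\times_S T$ as the only non-formal ingredient.
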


\begin{proof}
	By Lemma~\ref{lem:gamma-detect}, this follows from the fact that the functor $f_*\colon \PSh_\Sigma(\Sm_T) \to \PSh_\Sigma(\Sm_S)$ preserves Nisnevich and motivic equivalences \cite[Proposition 2.11]{norms}.
\end{proof}

\begin{cor}\label{cor:finite-pushforward}
	Let $f\colon T\to S$ be an integral morphism. Then the functor \[f_*\colon \H^\fr(T) \to \H^\fr(S)\] preserves colimits.
\end{cor}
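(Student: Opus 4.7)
The plan is to upgrade \propref{prop:finite-pushforward} to colimit preservation by observing that the underlying presheaf-level functor $f_*$ already preserves all colimits. Indeed, the functor $f_*\colon \PSh_\Sigma(\Span^\fr(\Sm_T)) \to \PSh_\Sigma(\Span^\fr(\Sm_S))$ is defined by restriction along the base-change functor $\Span^\fr(\Sm_S) \to \Span^\fr(\Sm_T)$, so that $(f_*\sF)(X) = \sF(X_T)$. Since colimits of presheaves of spaces are computed pointwise, this restriction preserves all colimits of presheaves; the preservation descends from $\PSh$ to $\PSh_\Sigma$ because base change commutes with finite disjoint unions, so $f_*$ commutes with the reflector $L_\Sigma$.

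Given these two ingredients---presheaf-level colimit preservation and preservation of motivic equivalences (\propref{prop:finite-pushforward})---the corollary follows formally. Writing $\iota$ for the inclusion of motivically local objects and using the standard formula $\colim_i^{\H^\fr}\sF_i \simeq \Lmot\colim_i^{\PSh_\Sigma}\iota\sF_i$, one computes
\[
f_*\left(\colim_i^{\H^\fr}\sF_i\right) \simeq \Lmot f_*\left(\colim_i^{\PSh_\Sigma}\iota\sF_i\right) \simeq \Lmot\left(\colim_i^{\PSh_\Sigma} f_*\iota\sF_i\right) \simeq \colim_i^{\H^\fr}f_*\sF_i,
\]
where the first and last equivalences use \propref{prop:finite-pushforward} to commute $f_*$ with the motivic-localization unit $X \to \iota\Lmot X$, and the middle one uses colimit preservation at the presheaf level.

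There is no essential obstacle: the corollary is a formal consequence of \propref{prop:finite-pushforward} combined with the tautological observation that restriction of presheaves preserves colimits. The only minor point worth checking is the compatibility of restriction with the reflection $L_\Sigma$, which follows from the elementary fact that base change preserves finite disjoint unions of smooth schemes.
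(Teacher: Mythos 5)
Your route differs from the paper's: the paper combines (i) preservation of sifted colimits (from Proposition~\ref{prop:finite-pushforward}) with (ii) preservation of finite coproducts, the latter via the semiadditivity of $\H^\fr(S)$ \cite[Proposition 3.2.10(iii)]{EHKSY1} and the fact that $f_*$, being a right adjoint, preserves finite products. You instead try to establish colimit preservation directly at the $\PSh_\Sigma$ level. The sifted part of your argument is fine (sifted colimits in $\PSh_\Sigma$ are computed pointwise, and restriction preserves pointwise colimits), as is the commutation of $f_*$ with $\Lmot$ via Proposition~\ref{prop:finite-pushforward}.

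The gap is the claim that $f_*$ commutes with the reflector $L_\Sigma$ ``because base change commutes with finite disjoint unions.'' That hypothesis only shows that the left Kan extension $f^*$ preserves $\Sigma$-equivalences, hence that $f_*$ preserves $\Sigma$-\emph{local objects}; it does not show that $f_*$ preserves $\Sigma$-\emph{equivalences}, which is what is needed for $f_*L_\Sigma \simeq L_\Sigma f_*$, and equivalently for the middle step of your displayed computation. In fact the implication you invoke is false in general. Take $C=D=\mathrm{Fin}$ and $u\colon\mathrm{Fin}\to\mathrm{Fin}$, $X\mapsto X\sqcup X$; then $u$ preserves finite coproducts, and $u^*$ restricts to an endofunctor of $\PSh_\Sigma(\mathrm{Fin})\simeq\Spc$, where under this identification it becomes $X\mapsto X\times X$. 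This functor does not preserve binary coproducts, so $u^*$ does not preserve colimits in $\PSh_\Sigma$ and does not commute with $L_\Sigma$, despite $u$ preserving coproducts. The feature that rescues the situation for $\Span^\fr(\Sm_S)$ is precisely that disjoint union is also a \emph{product} there, i.e.\ semiadditivity; without invoking that, your argument proves a general statement that is false. Once semiadditivity is in play, the cleanest formulation is exactly the paper's: $f_*$ preserves sifted colimits and, being a right adjoint, finite products, which in the semiadditive setting are finite coproducts.
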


\begin{proof}
	It follows from Proposition~\ref{prop:finite-pushforward} that $f_*$ preserves sifted colimits. It also preserves limits, hence finite sums since $\H^\fr(S)$ is semiadditive \cite[Proposition 3.2.10(iii)]{EHKSY1}.
\end{proof}

If $i\colon Z\hook S$ is a closed immersion, it follows from Corollary~\ref{cor:finite-pushforward} that we have an adjunction
\[
i_*: \H^\fr(Z) \rightleftarrows \H^\fr(S): i^!.
\]

\begin{thm}[Framed localization]
	\label{thm:main}
	Let $i\colon Z\hook S$ be a closed immersion with open complement $j\colon U\hook S$. 
	Then the null-sequence
	\[
	j_\sharp j^* \to \id \to i_*i^*
	\]
	of endofunctors of $\H^\fr(S)$ is a cofiber sequence.
	Dually, the null-sequence
	\[
	i_*i^! \to \id \to j_*j^*
	\]
	of endofunctors of $\H^\fr(S)$ is a fiber sequence.
\end{thm}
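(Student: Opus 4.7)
The plan is to follow the template of the Morel--Voevodsky argument reviewed in Section 1, adapting each step to the framed setting. The smoothness of the Hilbert scheme of framed points will play the role that the smoothness of $X/S$ does classically. The dual fiber sequence $i_*i^! \to \id \to j_*j^*$ formally follows from the cofiber sequence by taking levelwise right adjoints, using that $j_\sharp j^*$, $\id$, and $i_*i^*$ are all colimit-preserving endofunctors of the pointed presentable $\infty$-category $\H^\fr(S)$, so I focus on the cofiber statement.

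First, I reduce to checking the cofiber sequence on generators. All three endofunctors preserve colimits: $j_\sharp$, $j^*$, and $i^*$ are left adjoints, and $i_*$ preserves colimits by \corref{cor:finite-pushforward}. A set of generators of $\H^\fr(S)$ is given by the framed representables $h^\fr_S(X) := \gamma^* h_S(X)$ for $X \in \Sm_S$. On such a generator the statement reduces to showing that the natural map
\[
h^\fr_S(X) / h^\fr_S(X_U) \longrightarrow i_* h^\fr_Z(X_Z)
\]
is a motivic equivalence in $\H^\fr(S)$. The ``$\sF(\emptyset) \times h_S(U)$'' correction of \corref{cor:A1-hensel} disappears since $\H^\fr(S)$ is pointed by semiadditivity.

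Next, mirroring the Yoneda reduction used in the proof of \corref{cor:A1-hensel}, checking the displayed equivalence amounts to checking it after pulling back along every map $h^\fr_S(W) \to i_* h^\fr_Z(X_Z)$ (i.e., along every framed correspondence from $W_Z$ to $X_Z$ over $Z$). Using smooth base change for $j_\sharp$ and $j^*$, this reduces to a framed $\A^1$-Hensel lemma: for $X$ smooth over $S$ and $t$ a framed point of $X$ along $Z$, a suitably defined framed presheaf $h^\fr_S(X, t)$ on $\Span^\fr(\Sm_S)$ (via the framed analog of the Cartesian square in Section 1) satisfies $\Lmot h^\fr_S(X, t) \simeq \ast$.

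The framed $\A^1$-Hensel lemma is then proved as in the classical \thmref{thm:A1-hensel}: a framed analog of \lemref{lem:A1-hensel} allows replacing $X$ by any étale neighborhood of the image of $t$; Nisnevich-local connectivity produces a framed section of $X \to S$ extending $t$; and a second application of the framed \lemref{lem:A1-hensel} reduces to the case where $X \to S$ is a vector bundle and $t$ is the zero framed section, which is contractible by the usual $\A^1$-scaling homotopy. The main obstacle—and the essential geometric input of the paper—is the Nisnevich-local connectivity step, namely lifting framed points from $Z$ to $S$ over a Henselian pair $(S,Z)$. I expect this to be handled by identifying framed points of $X$ over $S$ with sections of the Hilbert scheme of framed points $\Hilb^\fr(X/S) \to S$ and proving that this Hilbert scheme is smooth whenever $X/S$ is; Elkik's theorem then applies.
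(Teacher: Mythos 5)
Your high-level strategy is right, and you have correctly identified the essential geometric input: the smoothness of the Hilbert scheme of framed points, which makes the Elkik/Hensel lifting available. However, there is a genuine gap in the ``Yoneda reduction'' step, and the route you propose around it differs from (and is more laborious than) what the paper does.

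The gap is this. The object $h^\fr_S(X)/h^\fr_S(X_U)$ is a cofiber in $\Einfty$-monoids (equivalently, in the semiadditive category $\PSh_\Sigma(\Span^\fr(\Sm_S))$), not a pushout of presheaves. Consequently it does not commute with base change along maps $h^\fr_S(W)\to i_*h^\fr_Z(X_Z)$, which is exactly what the Yoneda reduction in the proof of Corollary~\ref{cor:A1-hensel} relies on: there, the pushout $\h_S(X)\sqcup_{\h_S(X_U)}\h_S(U)$ is a colimit in $\PSh(\Sm_S)$ and hence universal. As stated, your reduction to a ``fiber $h^\fr_S(X,t)$'' does not go through. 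The paper's fix is an observation you don't mention: on a \emph{connected} scheme $Y$, $\h^\fr_S(X_U)(Y)$ is either a point (if $Y_Z\neq\emptyset$) or all of $\h^\fr_S(X)(Y)$ (if $Y_Z=\emptyset$), so the natural map from the presheaf pushout $\h^\fr_S(X)\sqcup_{j_\sharp\h^\fr_U(X_U)}\h_S(U)$ to the commutative-monoid quotient is a Zariski-local equivalence. That single step converts the problem into one about an honest pushout in $\PSh(\Sm_S)$, and is indispensable.

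Your second deviation is the plan to develop a framed analogue of Lemma~\ref{lem:A1-hensel} and Theorem~\ref{thm:A1-hensel}. This is a detour. Once the quotient has been replaced by the pushout in $\PSh(\Sm_S)$, one replaces $\h^\fr$ by $\h^\nfr$ using \cite[Corollary 2.3.27]{EHKSY1}; then $\h^\nfr_S(X)$ is an ind-smooth $S$-scheme by \cite[Theorem 5.1.5]{EHKSY1}, compatible with restriction to $U$ and $Z$, and the non-framed Corollary~\ref{cor:A1-hensel} applies directly with $\sF=\h^\nfr_S(X)$. No new Hensel lemma at the level of framed presheaves is needed, and proving one would require some care about the finite syntomic spans. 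Your instinct that ``framed points of $X$ over $S$'' should be identified with sections of $\Hilb^\fr(X/S)$ is exactly right, but the identification is only a motivic equivalence $h^\fr_S(X)\simeq h^\nfr_S(X)$, and it should be invoked at the level of presheaves on $\Sm_S$, after the pushout reduction, rather than as part of a new framed $\A^1$-Hensel lemma.

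The formal deduction of the dual fiber sequence from the cofiber sequence by passing to right adjoints is correct.
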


\begin{proof}
	It suffices to prove the first statement.
	Since all functors involved preserve colimits by Corollary~\ref{cor:finite-pushforward}, it suffices to check that the sequence is a cofiber sequence when evaluated on $\gamma^*(X_+)$ where $X$ is smooth over $S$ and affine \cite[Proposition 3.2.10(i)]{EHKSY1}. By Proposition~\ref{prop:finite-pushforward} and Lemma \ref{lem:gamma-detect}, it suffices to show that the map
	\[
	\h^\fr_S(X) / \h^\fr_S(X_U) \to i_*\h^\fr_Z(X_Z)
	\]
	in $\PSh(\Sm_S)$ is a motivic equivalence, where $\h^\fr_S(X) / \h^\fr_S(X_U)$ denotes the quotient in commutative monoids. Note that if $Y\in \Sch_S$ is \emph{connected} then
	\[
	\h^\fr_S(X_U)(Y) = \begin{cases}
		* & \text{if $Y_Z\neq\emptyset$,} \\
		\h^\fr_S(X)(Y) & \text{if $Y_Z=\emptyset$.}
	\end{cases}
	\]
	 It follows that the canonical map
	\[
	\h^\fr_S(X)\sqcup_{j_\sharp\h^\fr_U(X_U)} \h_S(U) \to \h^\fr_S(X) / \h^\fr_S(X_U)
	\]
	is an equivalence on connected essentially smooth $S$-schemes, hence it is a Zariski-local equivalence in $\PSh(\Sm_S)$.\footnote{Here, we use the fact that $\h^\fr_S(X)$ transforms cofiltered limits of qcqs schemes into colimits (since $X$ is locally finitely presented over $S$), as well as the hypercompleteness of the clopen topology on schemes.} We are thus reduced to showing that the map
	\[
	\h^\fr_S(X)\sqcup_{j_\sharp \h^\fr_U(X_U)} \h_S(U) \to i_*\h^\fr_Z(X_Z)
	\]
	is a motivic equivalence. By \cite[Corollary 2.3.27]{EHKSY1} and the non-framed version of Proposition~\ref{prop:finite-pushforward}, we can replace $\h^\fr$ by $\h^\nfr$: it suffices to show that the map
	\[
	\h^{\nfr}_S(X)\sqcup_{j_\sharp\h^{\nfr}_U(X_U)} \h_S(U) \to i_*\h^{\nfr}_Z(X_Z)
	\]
	is a motivic equivalence in $\PSh(\Sm_S)$. By \cite[Theorem 5.1.5]{EHKSY1}, the presheaf $\h^{\nfr}_S(X)$ on \emph{all} $S$-schemes is ind-representable by smooth $S$-schemes and compatible with any base change $S'\to S$. Considering $\h^{\nfr}_S(X)$ as a presheaf on smooth $S$-schemes, this implies that $i^*(\h^{\nfr}_S(X))\simeq \h^{\nfr}_Z(X_Z)$ and $j^*(\h^{\nfr}_S(X))\simeq \h^{\nfr}_U(X_U)$. Thus, the result follows from Corollary~\ref{cor:A1-hensel}.
\end{proof}

\begin{cor}\label{cor:main}
	Let $i\colon Z\hook S$ be a closed immersion with open complement $j\colon U\hook S$. 
	Then
	\[
	\H^\fr(U) \xrightarrow{j_\sharp} \H^\fr(S) \xrightarrow{i^*} \H^\fr(Z)
	\]
	is a cofiber sequence of presentable $\infty$-categories, i.e., the functor $i_*\colon \H^\fr(Z)\to\H^\fr(S)$ is fully faithful with image $(j^*)^{-1}(0)$.
\end{cor}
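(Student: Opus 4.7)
The plan is to derive Corollary~\ref{cor:main} as a formal consequence of Theorem~\ref{thm:main}. Recall that a sequence $\sA \xrightarrow{F} \sB \xrightarrow{G} \sC$ in $\PrL$ is a cofiber sequence if and only if, after passing to right adjoints, $\sC \xrightarrow{G^R} \sB \xrightarrow{F^R} \sA$ is a fiber sequence in $\PrR$; concretely, $G^R$ is fully faithful with essential image precisely $\{X \in \sB : F^R X \simeq 0\}$. Applying this with $F = j_\sharp$ and $G = i^*$, I would verify that $i_*$ is fully faithful with essential image $\{X \in \H^\fr(S) : j^* X \simeq 0\}$.

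First, the vanishings $j^* \circ i_* \simeq 0$ and $i^* \circ j_\sharp \simeq 0$ are immediate computations on representables $\gamma^*(Y_+)$, since the relevant basechange is empty. Combined with Theorem~\ref{thm:main}, these yield two consequences. If $j^* X \simeq 0$, the cofiber sequence $j_\sharp j^* X \to X \to i_* i^* X$ collapses to $X \simeq i_* i^* X$, showing $\{j^* = 0\} \subseteq \Im(i_*)$ (the reverse inclusion being $j^* i_* \simeq 0$). And applying $i^*$ to the same cofiber sequence for an arbitrary $X \in \H^\fr(S)$, and using $i^* j_\sharp \simeq 0$, shows that $i^* \eta_X \colon i^* X \to i^* i_* i^* X$ is an equivalence, where $\eta$ is the unit of $i^* \dashv i_*$; the triangular identity then gives that $\epsilon_{i^* X} \colon i^* i_* i^* X \to i^* X$ is an equivalence as well.

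To upgrade this to the statement that the counit $\epsilon_\sG \colon i^* i_* \sG \to \sG$ is an equivalence for every $\sG \in \H^\fr(Z)$—equivalently, that $i_*$ is fully faithful—it suffices that $i^* \colon \H^\fr(S) \to \H^\fr(Z)$ be essentially surjective. Since $i^*$ preserves colimits and $\H^\fr(Z)$ is generated under colimits by the objects $\gamma^*(W_+)$ with $W \in \Sm_Z$ affine, I would reduce to checking that each such $\gamma^*(W_+)$ lies in the essential image of $i^*$. This uses the standard geometric fact that smooth $Z$-schemes are Nisnevich-locally of the form $X_Z$ for some $X \in \Sm_S$: for a Nisnevich cover $\{W_\alpha \to W\}$ with lifts $X_\alpha \in \Sm_S$, Nisnevich descent and colimit-preservation of $i^*$ give $\gamma^*(W_+) \simeq \colim_\alpha \gamma^*((W_\alpha)_+) \simeq i^*\bigl(\colim_\alpha \gamma^*((X_\alpha)_+)\bigr)$. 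The main obstacle is this geometric liftability input; everything else is purely formal manipulation of the recollement provided by Theorem~\ref{thm:main}.
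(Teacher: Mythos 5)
Your overall approach matches the paper's: reduce to showing $i_*$ is fully faithful with essential image $\{j^*=0\}$, extract most of this formally from Theorem~\ref{thm:main} together with the vanishings $j^*i_*\simeq 0$ and $i^*j_\sharp\simeq 0$, and then invoke the geometric fact that smooth $Z$-schemes lift to smooth $S$-schemes Zariski-locally (the paper cites EGA~IV, 18.1.1). The formal manipulations differ slightly — you apply $i^*$ to the cofiber sequence to get that $\epsilon_{i^*X}$ is an equivalence, whereas the paper applies the theorem to $i_*B$ to get that $\eta_{i_*B}$ is an equivalence and then invokes conservativity of $i_*$ — but these are essentially dual moves with the same content.

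There is, however, a genuine gap in your final step. Reducing ``$i^*$ is essentially surjective'' to ``each generator $\gamma^*(W_+)$ lies in the essential image of $i^*$'' is not a valid inference: the essential image of a colimit-preserving functor is not in general closed under colimits (you cannot lift the colimit diagram, only its individual objects). Concretely, your formula $\gamma^*(W_+)\simeq i^*\bigl(\colim_\alpha\gamma^*((X_\alpha)_+)\bigr)$ is ill-posed, because although each \v Cech-nerve term $W_{\alpha_0}\cap\cdots\cap W_{\alpha_n}$ lifts to some smooth $S$-scheme, the lifts need not fit into a diagram over $S$: there is no common smooth $S$-scheme receiving the $X_\alpha$ whose base change to $Z$ is $W$. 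The correct repair does not need essential surjectivity of $i^*$ at all. Since $i^*$ and $i_*$ both preserve colimits, the class of $\sG\in\H^\fr(Z)$ for which $\epsilon_\sG$ is an equivalence is closed under colimits; you have shown it contains every object of the form $i^*X$; and every \v Cech-nerve term above is (individually, as an object) equivalent to some $i^*(\gamma^*(X'_+))$ with $X'$ smooth over $S$. By Zariski (or Nisnevich) descent, $\gamma^*(W_+)$ is a colimit of such terms, hence lies in the class, which gives full faithfulness of $i_*$. This is the same geometric input the paper uses, repackaged as conservativity of $i_*$: a map $\alpha$ with $i_*\alpha$ an equivalence is sent to an equivalence by $\Maps(i^*A,-)$ for every $A$, and the objects $i^*A$ detect equivalences because of the local liftability just described.
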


\begin{proof}
	Theorem~\ref{thm:main} implies that if $j^*(A)\simeq 0$ if and only if $A\simeq i_*i^*(A)$. It also implies that the unit map $i_*\to i_*i^*i_*$ is an equivalence, hence also the counit map $i_*i^*i_*\to i_*$ by the triangle identities. It remains to show that $i_*$ is conservative. This follows immediately from the fact that every smooth $Z$-scheme admits an open covering by pullbacks of smooth $S$-schemes \cite[Proposition 18.1.1]{EGA4-4}.
\end{proof}

\begin{rem}
	Similarly, the localization theorem holds for motivic spaces with finite étale transfers or with finite syntomic transfers, because the corresponding Hilbert schemes of points in $\A^n$ are smooth.
\end{rem}

The localization theorem implies as usual the closed base change property and the closed projection formula, which states that $i_*\colon \H^\fr(Z) \to \H^\fr(S)$ is an $\H^\fr(S)$-module functor, as well as $S^1$-stable and $\T$-stable versions.

In the $\T$-stable case, using the work of Ayoub \cite{Ayoub} and Cisinski–Déglise \cite{CD}, we obtain for every separated morphism of finite type $f\colon X\to Y$ an exceptional adjunction
\[
f_!: \SH^\fr(X) \rightleftarrows \SH^\fr(Y): f^!
\]
satisfying the usual properties. In particular, framed motivic spectra satisfy proper base change and the proper projection formula.

Note that the cofiber sequence of Corollary~\ref{cor:main} is not part of a recollement in the sense of \cite[Definition A.8.1]{HA}, because $i^*$ is not left exact and the pair $(i^*,j^*)$ is not conservative. These properties are however automatic in a stable setting:

\begin{cor}\label{cor:recollement}
	Let $i\colon Z\hook S$ be a closed immersion with open complement $j\colon U\hook S$. Then the following pairs of fully faithful functors are recollements:
	\begin{enumerate}
		\item $\SH^{S^1,\fr}(Z)\xrightarrow{i_*} \SH^{S^1,\fr}(S) \xleftarrow{j_*} \SH^{S^1,\fr}(U)$,
		\item $\SH^{\fr}(Z)\xrightarrow{i_*} \SH^{\fr}(S) \xleftarrow{j_*} \SH^{\fr}(U)$.
	\end{enumerate}
\end{cor}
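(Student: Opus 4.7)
The plan is to verify the axioms of a recollement in the sense of \cite[Definition A.8.1]{HA}: fully faithful functors $i_*$ and $j_*$ with joint conservativity of $(i^*, j^*)$ and the vanishing $j^* i_* \simeq 0$. We focus on case (1); case (2) is analogous, and can alternatively be deduced by $\T$-stabilization.

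First, the $S^1$-stabilization of Theorem~\ref{thm:main} produces in $\SH^{S^1,\fr}(S)$ both the cofiber sequence $j_\sharp j^* \to \id \to i_* i^*$ and, by passage to right adjoints, the dual fiber sequence $i_* i^! \to \id \to j_* j^*$ of endofunctors. All relevant functors preserve colimits (the key case of $i_*$ being Corollary~\ref{cor:finite-pushforward}, which persists after stabilization), so the required right adjoints exist.

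From these two exact sequences the recollement axioms follow formally. Full faithfulness of $i_*$ is the stable analog of Corollary~\ref{cor:main} (the proof applies verbatim). Applying $j^*$ to the cofiber sequence and using $j^* j_\sharp \simeq \id$ (which holds because $j_\sharp$ is the left Kan extension along the fully faithful inclusion $\Sm_U \hook \Sm_S$) yields $j^* i_* i^* \simeq 0$; combining with the essential surjectivity of $i^*$ (a consequence of $i^* i_* \simeq \id$), we deduce the vanishing $j^* i_* \simeq 0$. Applying $j^*$ to the dual fiber sequence and using this vanishing gives $j^* \isoto j^* j_* j^*$; since $j^*$ is essentially surjective (again by $j^* j_\sharp \simeq \id$), we conclude that the counit $j^* j_* \to \id$ is an equivalence, i.e., $j_*$ is fully faithful. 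Joint conservativity of $(i^*, j^*)$ is immediate from the cofiber sequence: an object $A$ with $i^* A \simeq 0 \simeq j^* A$ fits into a cofiber sequence $0 \to A \to 0$, forcing $A \simeq 0$.

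The only real content is Theorem~\ref{thm:main}; everything else is formal manipulation of (co)fiber sequences in a stable setting, so no serious obstacle is expected. For part (2) one either runs the same argument in the $\T$-stable setting, or observes that the recollement structure descends along the symmetric monoidal localization $\SH^{S^1,\fr}(S) \to \SH^\fr(S)$ inverting the Tate sphere.
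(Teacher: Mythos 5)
The paper states this corollary without an explicit proof, relying only on the preceding sentence that the two missing ingredients for a recollement---left exactness of $i^*$ and joint conservativity of $(i^*,j^*)$---become automatic after stabilization. Your proposal correctly fills in the remaining details (vanishing of $j^*i_*$, full faithfulness of $j_*$ via the fiber sequence $i_*i^! \to \id \to j_*j^*$, and joint conservativity from the cofiber sequence), and in that sense follows the route the paper intends: everything is extracted formally from the stable version of Theorem~\ref{thm:main} together with Corollary~\ref{cor:main}.

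One small but pointed omission: your opening list of recollement axioms ("fully faithful $i_*$ and $j_*$, joint conservativity, $j^*i_* \simeq 0$") leaves out condition (3) of \cite[Definition A.8.1]{HA}, namely that the localization functors $L_0 = i^*$ and $L_1 = j^*$ be \emph{left exact}. This is in fact the very property the author singles out as failing at the level of $\H^\fr$ and holding stably, so it deserves a sentence: $i^*$ and $j^*$ preserve all colimits (being left adjoints), hence in particular finite colimits, and an exact functor between stable $\infty$-categories automatically preserves finite limits. With that sentence added your argument is complete; the rest is a correct and more explicit version of what the paper leaves implicit. Your closing remark for part (2) is fine, though running the same argument in the $\T$-stable setting is cleaner than invoking descent of the recollement along the $\T$-localization, which would require a further compatibility check.
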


\begin{cor}\label{cor:conservativity}
	Let $S$ be a scheme locally of finite Krull dimension. Then the following pullback functors are conservative:
	\begin{enumerate}
		\item $\SH^{S^1,\fr}(S) \to \prod_{s\in S} \SH^{S^1,\fr}(s)$,
		\item $\SH^\fr(S) \to \prod_{s\in S} \SH^\fr(s)$.
	\end{enumerate}
\end{cor}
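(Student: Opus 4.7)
The plan is to treat both cases in parallel by induction on $\dim S$, using Corollary~\ref{cor:recollement} to peel off a dense open where $E$ already vanishes and iterate on the lower-dimensional closed complement. I write the argument for (2); (1) is identical.

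Let $E \in \SH^\fr(S)$ with $s^*E \simeq 0$ for every $s \in S$; the goal is $E \simeq 0$, with the case $S = \emptyset$ serving as the base. By Theorem~\ref{thm:main} applied with open complement empty, the closed immersion $S_\red \hook S$ induces an equivalence on $\SH^\fr$, so I may assume $S$ reduced. For the inductive step, I would invoke continuity of $\SH^\fr$ along cofiltered limits of qcqs schemes with affine transition maps: when $S$ is reduced, $\Spec\kappa(\eta) = \lim_{\eta \in U} U$ as $U$ ranges over open neighborhoods, so $\SH^\fr(\kappa(\eta)) \simeq \colim_{\eta \in U} \SH^\fr(U)$ for each generic point $\eta$ of $S$. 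Since there are only finitely many generic points and $\eta^*E \simeq 0$ for each, there is a dense open $U \subseteq S$ with $E\vert_U \simeq 0$. Letting $i\colon Z \hook S$ be the complementary reduced closed immersion, $\dim Z < \dim S$; the recollement forces $E \simeq i_*i^*E$, and the stalks of $i^*E$ at $z \in Z$ coincide with those of $E$ at $z$ (because $\Spec\kappa(z) \to Z \hook S$ is the structure morphism of $z$ as a point of $S$), hence vanish. The inductive hypothesis applied to $Z$ yields $i^*E \simeq 0$, whence $E \simeq 0$.

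The main obstacle in implementing this plan is the continuity input: for a cofiltered system $(S_\alpha)$ of qcqs schemes with affine transitions and limit $S$, one needs $\SH^\fr(S) \simeq \colim_\alpha \SH^\fr(S_\alpha)$. I would deduce this from the analogous property of $\Span^\fr(\Sm_{(-)})$---every smooth affine scheme of finite presentation over $S$ and every framed correspondence between two such descend to some $S_\alpha$---together with the compatibility of Nisnevich sheafification and $\A^1$-localization with filtered colimits of presentable $\infty$-categories.
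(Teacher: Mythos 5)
Your argument is correct in outline but takes a genuinely different route from the paper, and it imports a nontrivial extra ingredient that the paper avoids. The paper's proof reduces to the case $S$ local via hypercompleteness of the Zariski $\infty$-topos of $S$ (valid because $S$ is Noetherian of finite Krull dimension), and then applies the recollement of Corollary~\ref{cor:recollement} to the unique closed point $s$ and its open complement $U=S\smallsetminus\{s\}$, which has dimension $<\dim S$; the induction then closes immediately, with no reduction to $S$ reduced and no continuity. Your proof instead peels off a dense open where $E$ vanishes, which requires passing from the residue fields of the generic points to an actual open neighborhood, and that passage is exactly where the continuity statement $\SH^\fr(\lim_\alpha S_\alpha)\simeq\colim_\alpha\SH^\fr(S_\alpha)$ enters. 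You correctly flag this as the main gap: that continuity statement is not proved in the present paper nor in the references it cites, and establishing it requires checking that smooth affine schemes of finite presentation, finite syntomic morphisms, and (crucially) $K$-theoretic trivializations of the cotangent complex all descend along cofiltered limits with affine transitions, and then that Nisnevich sheafification, $\A^1$-localization, and $\T$-stabilization are all compatible with the resulting filtered colimit of presentable $\infty$-categories. All of this is believable but is substantially more work than the paper's one-line reduction to local schemes. One small point in your favor: your reduction to $S$ reduced and the observation that $\dim Z<\dim S$ for $Z$ the reduced complement of a dense open in a Noetherian scheme are both correct, and your handling of the recollement ($E\simeq i_*i^*E$, stalks of $i^*E$ agree with stalks of $E$ along $Z$) is sound. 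In short: your proof is a valid alternative provided continuity of $\SH^\fr$ is established, but the paper's route via Zariski hypercompleteness is more economical and stays within the toolkit already assembled in the paper.
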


\begin{proof}
	We can assume $S$ qcqs and we prove the claim by induction on the dimension of $S$.
	For $s\in S$, let $\iota_s\colon \Spec \sO_{S,s}\to S$ be the canonical map. Since $\iota_s$ is pro-smooth, the pullback functor \[\iota_s^*\colon \PSh_\Sigma(\Span^\fr(\Sm_S))\to \PSh_\Sigma(\Span^\fr(\Sm_{\sO_{S,s}}))\] preserves $\A^1$-invariant Nisnevich sheaves and commutes with the internal Hom from compact objects (in particular with $\Omega$ and $\Omega_\T$). Hence, for a framed motivic $S^1$-spectrum or $\T$-spectrum $E=(E_n)_{n\geq 0}$ over $S$ and a qcqs smooth $S$-scheme $X$, the Zariski stalk of $E_n^X$ at $s$ may be computed as $\iota_s^*(E)_n(X\times_S\Spec\sO_{S,s})$.
	By the hypercompleteness of the Zariski $\infty$-topos of $S$ \cite[\sectsign 3]{ClausenMathew}, equivalences between Zariski sheaves on $S$ are detected on stalks. 
	Since the family of functors $E\mapsto E_n^X(S)$ is conservative, so is the family $\iota_s^*$ for $s\in S$.
	We can therefore assume $S$ local.
	Then the result follows from Corollary~\ref{cor:recollement} and the induction hypothesis.
\end{proof}

\begin{rem}
	Corollary~\ref{cor:conservativity} is also true if $S$ is locally Noetherian of arbitrary dimension: see the proof of \cite[Proposition 3.24]{AyoubEtale}.
\end{rem}

\section{The reconstruction theorem over a general base scheme}

Next, we extend the reconstruction theorem \cite[Theorem 3.5.12]{EHKSY1} to more general base schemes. 

\begin{lem}\label{lem:gamma-BC}
	Let $f\colon T\to S$ be a morphism of schemes. Then the canonical transformation
	\[
	f^*\gamma_* \to \gamma_*f^*\colon \H^\fr(S) \to \H(T)
	\]
	is an equivalence, and similarly for $\SH^{S^1}$ and $\SH$.
\end{lem}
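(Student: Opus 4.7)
The natural transformation is the Beck--Chevalley mate of the equivalence $f^*\gamma^* \simeq \gamma^* f^*$ of left adjoints, both computing the Kan extension of the base-change functor $\Sm_S \to \Sm_T$. The plan is to reduce the claim to a computation on generators and then import the representability results of \cite{EHKSY1}.

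First, I would check that both $f^*\gamma_*$ and $\gamma_* f^*$ preserve colimits: $f^*$ does so as a left adjoint, and $\gamma_*\colon \H^\fr(S)\to\H(S)$ preserves colimits because precomposition with $\gamma$ is pointwise-colimit-preserving on presheaves while, by \lemref{lem:gamma-detect}, $\gamma_*$ preserves motivic equivalences and motivic-local objects. Since the $\gamma^*(X_+)$ with $X \in \Sm_S$ affine generate $\H^\fr(S)$ under colimits \cite[Proposition 3.2.10(i)]{EHKSY1}, it then suffices to verify the transformation on such generators.

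On $\gamma^*(X_+)$, the commutativity of left adjoints yields $\gamma_* f^* \gamma^*(X_+) \simeq \gamma_*\gamma^*((X_T)_+) \simeq \Lmot \h^\fr_T(X_T)$ in $\H(T)$, whereas $f^*\gamma_*\gamma^*(X_+) \simeq f^*\Lmot \h^\fr_S(X)$. The problem therefore reduces to the equivalence
\[
f^*\Lmot \h^\fr_S(X) \simeq \Lmot \h^\fr_T(X_T).
\]
Mirroring the proof of \thmref{thm:main}, I would use \cite[Corollary 2.3.27]{EHKSY1} to replace $\h^\fr$ by $\h^\nfr$ on both sides, and then invoke \cite[Theorem 5.1.5]{EHKSY1} to present $\Lmot \h^\nfr_S(X)$ as a filtered colimit of representables $\h_S(H_i)$ with $H_i$ smooth over $S$ (Hilbert schemes of normally framed points). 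Granting that this ind-smooth presentation is compatible with base change along $f$, so that the $(H_i)_T$ realize the analogous presentation of $\Lmot \h^\nfr_T(X_T)$, the equivalence follows at once since $f^*$ preserves filtered colimits and sends $\h_S(H_i)$ to $\h_T((H_i)_T)$.

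The $\SH^{S^1}$ and $\SH$ versions then follow from the unstable statement by stabilization with respect to $S^1$ and $(\bP^1,\infty)$ respectively, since both $\gamma_*$ and $f^*$ commute with the corresponding suspension functors. I expect the main obstacle to be verifying cleanly the base-change compatibility of the ind-Hilbert-scheme presentation: it generalizes the open and closed pullback compatibilities $\h^\nfr_U(X_U)=\h^\nfr_S(X)_U$ and $\h^\nfr_Z(X_Z)=\h^\nfr_S(X)_Z$ already invoked in \thmref{thm:main}, and should be traceable through the geometric construction of \cite[Theorem 5.1.5]{EHKSY1}.
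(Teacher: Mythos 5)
Your proposal follows essentially the same route as the paper: reduce to generators $\gamma^*(X_+)$ using that $\gamma_*$ and $f^*$ preserve colimits and commute with $\Lmot$, replace $\h^\fr$ by $\h^\nfr$ via \cite[Corollary 2.3.27]{EHKSY1}, and conclude from the base-change stability of the smooth ind-scheme $\h^\nfr_S(X)$ established in \cite[Theorem 5.1.5]{EHKSY1}. The one small slip is phrasing the ind-presentation for $\Lmot\h^\nfr_S(X)$ rather than the underlying presheaf $\h^\nfr_S(X)$, but this is harmless since $f^*$ commutes with $\Lmot$; the paper indeed checks the map $f^*\h^\nfr_S(X)\to\h^\nfr_T(X_T)$ is already an isomorphism of presheaves.
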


\begin{proof}
	The stable statements follow from the unstable one, using the fact that the functors $\gamma_*$ and $f^*$ can be computed levelwise on prespectra.
	Since $f^*$ and $\gamma_*$ preserve sifted colimits and commute with $L_\mot$ \cite[Propositions 3.2.14 and 3.2.15]{EHKSY1}, it suffices to check that the canonical map
	\[
	f^*\h^\fr_S(X) \to \h^\fr_T(X\times_ST)
	\]
	is a motivic equivalence for every $X\in\Sm_S$ affine, where we regard $\h^\fr_S(X)$ as a presheaf on $\Sm_S$. By \cite[Corollary 2.3.27]{EHKSY1}, we can replace $\h^\fr$ by $\h^\nfr$. But the map
	\[
	f^*\h^\nfr_S(X) \to \h^\nfr_T(X\times_ST)
	\]
	is an isomorphism because $\h^\nfr_S(X)$ is a smooth ind-$S$-scheme that is stable under base change \cite[Theorem 5.1.5]{EHKSY1}.
\end{proof}

\begin{lem}\label{lem:gamma-proper}
	Let $p\colon T\to S$ be a proper morphism of schemes. Then the canonical transformation
	\[
	\gamma^* p_* \to p_*\gamma^*\colon \SH(T) \to \SH^\fr(S)
	\]
	is an equivalence.
\end{lem}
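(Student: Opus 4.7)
The canonical transformation $\gamma^* p_* \to p_* \gamma^*$ is the mate, with respect to the adjunction $p^* \dashv p_*$, of the equivalence $p^* \gamma^* \simeq \gamma^* p^*$ (an automatic compatibility of $\gamma^*$ with pullbacks, coming from its construction as a left Kan extension along the base-change-stable inclusion $\Sm_S \hook \Span^\fr(\Sm_S)$). Since Corollary~\ref{cor:main} equips $\SH^\fr$ with Ayoub's axioms and hence a full six-functor formalism, $p_* \simeq p_!$ holds for proper $p$ in both $\SH$ and $\SH^\fr$; so $\gamma^* p_*$ and $p_* \gamma^*$ both preserve colimits, and it suffices to check the transformation on the compact generators of $\SH(T)$, e.g.\ on objects of the form $f_\sharp \Sigma^\infty X_+$ for smooth $f\colon X\to T$.

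My plan is to reduce general proper $p$ to two special situations and dispatch those formally. For $p = i\colon Z\hook S$ a closed immersion with open complement $j\colon U\hook S$, Corollary~\ref{cor:recollement} characterizes $i_* A$ as the unique object $B\in\SH^\fr(S)$ with $j^* B\simeq 0$ and $i^* B\simeq A$. Since $\gamma^*$ commutes with $i^*$ and $j^*$, the object $\gamma^* i_* A$ satisfies the analogous characterization with $A$ replaced by $\gamma^* A$, yielding $\gamma^* i_* A \simeq i_* \gamma^* A$; chasing units and counits identifies this equivalence with the canonical transformation. For $p$ smooth proper, Atiyah duality, a formal output of the six-functor formalism, gives $p_* \simeq p_\sharp \circ \Sigma^{-\sT_p}$, where $\Sigma^{-\sT_p}$ denotes the Thom desuspension by the relative tangent bundle; since $\gamma^*$ is symmetric monoidal, commutes with $p_\sharp$ for smooth $p$ (again from its construction as a left Kan extension), and identifies the corresponding Thom twists on both sides, we obtain $\gamma^* p_* \simeq p_\sharp \Sigma^{-\sT_p} \gamma^* \simeq p_* \gamma^*$.

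To reduce arbitrary proper $p\colon T\to S$ to these two cases, I invoke Chow's lemma and Noetherian induction on $\dim T$. Given such $p$, choose a projective birational $q\colon T'\to T$ that is an isomorphism over a dense open $V\subset T$ with closed complement $k\colon W\hook T$ and open inclusion $\ell\colon V\hook T$. Pushing the localization cofiber sequence $\ell_\sharp \ell^* A \to A \to k_* k^* A$ forward by $p_*$ splits the problem into two pieces. The $k_* k^* A$ piece becomes the inductive hypothesis applied to the proper morphism $p\circ k\colon W\to S$ (whose source has strictly smaller dimension) composed with the closed-immersion case for $k$. The $\ell_\sharp \ell^* A$ piece is reduced, via the identification $q_* q^*(\ell_\sharp \ell^* A) \simeq \ell_\sharp \ell^* A$ (since $q$ is an isomorphism over the support) and $p_* q_* \simeq (p\circ q)_*$, to the projective morphism $p\circ q$; every projective morphism factors as a closed immersion into $\bP^n_S$ followed by the smooth proper projection $\bP^n_S\to S$, both already handled.

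The main obstacle is precisely this Chow/Noetherian-induction reduction: one must thread the exchange natural transformations carefully through the localization cofiber sequences and verify that they glue to the canonical transformation itself, not merely to some abstract equivalence. The two special cases above, by contrast, are essentially formal consequences of the six-functor machinery now available on $\SH^\fr$ after Theorem~\ref{thm:main}.
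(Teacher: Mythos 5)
Your proposal is essentially the paper's proof: the closed-immersion case is read off the framed and unframed localization theorems (you phrase it through the recollement of Corollary~\ref{cor:recollement}, the paper directly from Theorem~\ref{thm:main}), the smooth proper case follows from the ambidexterity equivalence $p_*\simeq p_\sharp\Sigma^{-\Omega_p}$ (which you call Atiyah duality), and (locally) projective morphisms are handled by factoring through $\mathbf{P}^n_S$. The only place you go further than the paper is the general proper case via Chow's lemma and Noetherian induction, which the paper explicitly declines to carry out (``which we will not use'') and instead cites \cite[Prop.\ 2.3.11(2)]{CD} and \cite[Prop.\ C.13]{HoyoisGLV}; your outline is broadly sound — in particular $q_*q^*\ell_\sharp\ell^*\simeq\ell_\sharp\ell^*$ does hold, since $q_*\ell'_\sharp\simeq q_!\ell'_!\simeq(\ell\circ q_V)_!\simeq\ell_\sharp(q_V)_*$ and $q_V$ is an isomorphism — but it quietly assumes $T$ Noetherian of finite Krull dimension (a hypothesis not in the statement), and, as you yourself flag, one still has to check that these exchanges assemble to the canonical transformation rather than to some other equivalence; that bookkeeping is precisely what the cited references do, and the part actually used in Theorem~\ref{thm:reconstruction} is only the projective case, where both arguments coincide.
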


\begin{proof}
	If $p$ is a closed immersion, this follows from Theorem~\ref{thm:main} and its non-framed version.
	If $p$ is smooth and proper, this follows from the ambidexterity equivalences $p_*\simeq p_\sharp\Sigma^{-\Omega_p}$.
	Together with Zariski descent, this implies the result for $p$ locally projective. The general case (which we will not use) follows by a standard use of Chow's lemma; see \cite[Proposition 2.3.11(2)]{CD} and \cite[Proposition C.13]{HoyoisGLV} for details.
\end{proof}

\begin{thm}[Reconstruction Theorem]
	\label{thm:reconstruction}
	Let $S$ be a scheme. Then the functor \[\gamma^*\colon \SH(S)\to\SH^\fr(S)\] is an equivalence of symmetric monoidal $\infty$-categories
\end{thm}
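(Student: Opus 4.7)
The plan is to combine the localization machinery developed above with two successive reductions---first by continuity, then by a dimension/recollement induction---to bring the statement down to the known case of a perfect field \cite[Theorem 3.5.11]{EHKSY1}. Since $\gamma^*$ is colimit-preserving and symmetric monoidal, and its essential image generates the target under colimits (via the suspension spectra $\Sigma^\infty X_+$ for $X\in\Sm_S$), essential surjectivity will be automatic from full faithfulness. Full faithfulness in turn amounts to showing that the unit $\eta_A\colon A\to\gamma_*\gamma^*A$ is an equivalence for every $A\in\SH(S)$.

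As a first reduction, I would apply continuity: $\SH$, $\SH^\fr$, and $\gamma^*$ are all continuous along cofiltered limits of qcqs schemes with affine transition maps (continuity of $\gamma_*$ follows from Lemma~\ref{lem:gamma-BC}), so we may assume that $S$ is of finite type over $\Spec\Z$, and in particular Noetherian of finite Krull dimension. For such $S$, Corollary~\ref{cor:conservativity} gives that the family of pullbacks $\{i_s^*\}_{s\in S}$ is jointly conservative on $\SH^\fr(S)$. Because $\gamma^*$ is a natural transformation of presheaves of symmetric monoidal $\infty$-categories on $\Sch$, it commutes with every $f^*$; combined with Lemma~\ref{lem:gamma-BC}, this yields $i_s^*\eta_A \simeq \eta_{i_s^*A}$. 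Thus we reduce to proving the theorem when $S=\Spec\kappa$ is the spectrum of a field.

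When $\kappa$ is perfect, \cite[Theorem 3.5.11]{EHKSY1} finishes the argument. The remaining case---imperfect $\kappa$ of characteristic $p$---is where the real work lies. The plan is to write $\Spec\kappa$ as the cofiltered limit of spectra of its finitely generated subfields over $\bbF_p$; by Noether normalization and the perfectness of $\bbF_p$, each such subfield is a finite separable extension of a purely transcendental extension, so its spectrum is essentially smooth over $\bbF_p$, hence a cofiltered limit of smooth affine $\bbF_p$-schemes. A further continuity step then reduces the problem to reconstruction over smooth affine $\bbF_p$-schemes, which one handles by induction on Krull dimension using the recollement (Corollary~\ref{cor:recollement} and its non-framed analog), with base case $\Spec\bbF_p$. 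The main obstacle is executing this last step without circularity, since smooth $\bbF_p$-schemes have imperfect residue fields at non-closed points; the continuity and dimension-induction reductions must be carefully interleaved, for instance using the transcendence degree of the residue field as a secondary induction parameter.
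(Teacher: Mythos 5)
Your proposal takes a genuinely different route from the paper, and that difference exposes a gap you correctly sense but do not close. The paper's first move is to reduce the unit $\id\to\gamma_*\gamma^*$ to the single object $\1_S$: one invokes the generation of $\SH(S)$ under colimits by $\Sigma^n_\T p_*\1_X$ for $p$ projective (\cite[Lemme 2.2.23]{Ayoub}) together with Lemma~\ref{lem:gamma-proper} (commutation $\gamma^* p_*\simeq p_*\gamma^*$ for $p$ proper, itself a consequence of the framed localization theorem and ambidexterity), so that $\eta_A$ is an equivalence for all $A$ once $\eta_{\1_S}$ is. Your proposal skips this and applies continuity and conservativity directly to $\eta_A$ for general $A$.

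The payoff of the paper's reduction to $\1_S$ is decisive: since $\1_S=p^*\1_{\Spec\Z}$, Lemma~\ref{lem:gamma-BC} lets one base change the unit map all the way down to $S=\Spec\Z$, whose residue fields are $\bQ$ and the $\bF_p$ — all \emph{perfect}, so \cite[Theorem 3.5.11]{EHKSY1} applies directly. By not reducing to $\1_S$ first, you cannot base change to $\Spec\Z$ (a general $A\in\SH(S)$ is not pulled back from $\Spec\Z$), and you are left with the residue fields of a finite-type $\Z$-scheme, which include imperfect fields in characteristic $p$. Your sketch for handling these is circular in exactly the way you worry: if $L$ is finitely generated of transcendence degree $n$ over $\bF_p$ and you write $\Spec L=\lim_U U$ over dense affine opens $U$ of a smooth affine $\bF_p$-model, then proving reconstruction over each such $U$ by stalk-wise conservativity (Corollary~\ref{cor:conservativity}) or by the recollement again requires reconstruction at the generic point of $U$ — whose residue field is $L$ itself, with the same transcendence degree. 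The proposed secondary induction on transcendence degree therefore does not close. The missing ingredient is precisely the reduction to $\1_S$ via Lemma~\ref{lem:gamma-proper} and Ayoub's generation lemma, which sidesteps imperfect fields altogether.
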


\begin{proof}
	Since the right adjoint $\gamma_*$ is conservative \cite[Proposition 3.5.2]{EHKSY1}, it suffices to show that $\gamma^*$ is fully faithful, i.e., that the unit transformation $\id\to \gamma_*\gamma^*$ is an equivalence. 
	By Zariski descent, we may assume $S$ qcqs. In this case, the $\infty$-category $\SH(S)$ is generated under colimits by the objects $\Sigma_\T^n p_*\1_X$ for $n\in\Z$ and $p\colon X\to S$ a projective morphism \cite[Lemme 2.2.23]{Ayoub}.
	By Lemma~\ref{lem:gamma-proper}, we are thus reduced to proving that $\1_S\to \gamma_*\gamma^*\1_S$ is an equivalence.
	By Lemma~\ref{lem:gamma-BC}, we can now assume that $S=\Spec\Z$.
	By the non-framed version of Corollary~\ref{cor:conservativity} and again Lemma~\ref{lem:gamma-BC}, the result follows from the cases $S=\Spec\bQ$ and $S=\Spec\bF_p$ for $p$ prime, which are known by \cite[Theorem 3.5.12]{EHKSY1}.
\end{proof}

\begin{rem}
	The argument used in the proof of Theorem~\ref{thm:reconstruction} can be axiomatized as follows. Let $S$ be a qcqs scheme of finite Krull dimension, let
	\[\bA,\bB\colon (\Sch_S^\mathrm{qcqs})^{\op} \to \InftyCat^\mathrm{st}\]
	be functors satisfying Ayoub's axioms \cite[\sectsign 1.4.1]{Ayoub}, and let $\phi\colon \bA\to \bB$ be a natural transformation that commutes with $f_\sharp$ for $f$ smooth. Suppose that:
	\begin{enumerate}
		\item Each $\bA(X)$ is cocomplete and generated under colimits by objects of the form $f_\sharp f^*p^* (A)$ where $f\colon Y \to X$ is smooth, $p\colon X\to S$ is the structure map, and $A\in\bA(S)$.
		\item $\phi$ has a right adjoint that preserves colimits and commutes with $f^*$ for any $f$.
		\item $\phi_s\colon \bA(s)\to \bB(s)$ is fully faithful for every $s\in S$.
	\end{enumerate}
	Then $\phi_X\colon \bA(X)\to \bB(X)$ is fully faithful for every $X\in\Sch_S^\mathrm{qcqs}$.
\end{rem}

Since $\SH^\fr(S)\simeq \SH(S)\otimes_{\H(S)}\H^\fr(S)$, the reconstruction theorem implies that the right-lax symmetric monoidal functor $\Omega^\infty_\T\colon \SH(S)\to \H(S)$ factors \emph{uniquely} as
\[
\begin{tikzcd}
	\SH(S) \ar[swap]{d}{\Omega^\infty_\T} \ar[dashed]{dr}{!} & \\
	\H(S) & \H^\fr(S). \ar{l}{\gamma_*}
\end{tikzcd}
\]
Indeed, the $\infty$-groupoid of such factorizations is equivalent to that of colimit-preserving symmetric monoidal retractions of the functor $\gamma^*\colon \SH(S)\to \SH^\fr(S)$.
 In particular, the underlying cohomology theory $\Sm_S^\op \to \Spc$ of a motivic spectrum extends canonically to the $\infty$-category $\Span^\fr(\Sm_S)^\op$. As proved in \cite[Theorem 3.3.10]{EHKSY2}, this enhanced functoriality of cohomology theories can be described using the Gysin morphisms constructed using Verdier's deformation to the normal cone (see \cite{DJK}).

\section{Application to motivic cohomology}

In this final section, we obtain a simple presentation of the motivic cohomology spectrum in terms of framed correspondences. Let us denote by $H\Z_S\in \SH(S)$ Spitzweck's motivic cohomology spectrum over a base scheme $S$ \cite{SpitzweckHZ}. By construction, it is stable under arbitrary base change, and when $S$ is a Dedekind domain it represents Bloch–Levine motivic cohomology. More precisely, for such $S$, the presheaf $X\mapsto \Maps_{\SH(S)}(\Sigma^\infty_{\T}X_+,\Sigma^n_\T H\Z_S)$ on smooth $S$-schemes is the Zariski sheafification of Bloch's cycle complex $X\mapsto z^n(X,*)$ (which is known to already be a Zariski sheaf when $S$ is semilocal \cite[Theorem 1.7]{LevineLocalization}). 
When $S$ is the spectrum of a field, $H\Z_S$ is equivalent to Voevodsky's motivic cohomology spectrum.

\def\flf{\mathrm{f{}lf}}
For any commutative monoid $A$, the constant sheaf $A_S$ on $\Sm_S$ admits a canonical extension to $\Span^\flf(\Sm_S)$, where ``$\flf$'' denotes the class of finite locally free morphisms: to a span
\[
  \begin{tikzcd}
     & Z \ar[swap]{ld}{f}\ar{rd}{g} & \\
    X &   & Y
  \end{tikzcd}
\]
with $f$ finite locally free and a locally constant function $a\colon Y\to A$, we associate the locally constant function
\[
X\to A,\quad x\mapsto \sum_{z\in f^{-1}(x)} \deg_z(f)\cdot a(g(z))
\]
(see \cite[Lemma 13.13]{norms}).
In particular, $A_S$ can be regarded as an object of $\H^\fr(S)$ via the forgetful functor $\Span^\fr(\Sm_S) \to \Span^\flf(\Sm_S)$. 

If $f\colon T\to S$ is a morphism, there is an obvious map $A_S \to f_* A_T$ in $\H^\fr(S)$, whence by adjunction a map $f^* A_S \to A_T$ in $\H^\fr(T)$.

\begin{lem}\label{lem:Z-bc}
	Let $A$ be a commutative monoid and $f\colon T\to S$ a morphism of schemes. Then the canonical map $f^*A_S \to A_T$ in $\H^\fr(T)$ is an equivalence.
\end{lem}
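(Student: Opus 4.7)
The plan is to reduce the claim to its counterpart in the unframed motivic $\infty$-category $\H(T)$, where it is essentially tautological. The first step is to observe that $\gamma_*\colon\H^\fr(T)\to\H(T)$ is conservative: by \lemref{lem:gamma-detect} it detects motivic equivalences at the level of $\PSh_\Sigma$, and a motivic equivalence between motivic-local objects is an equivalence. It therefore suffices to check that the image of our map under $\gamma_*$ is an equivalence in $\H(T)$.

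Next, I would apply \lemref{lem:gamma-BC} to identify $\gamma_*f^*A_S\simeq f^*\gamma_*A_S$. Since $A_S\in\H^\fr(S)$ is by construction an extension of the constant sheaf on $\Sm_S$ along $\Span^\fr(\Sm_S)\to\Span^\flf(\Sm_S)$, restriction along $\gamma\colon\Sm_S\hook\Span^\fr(\Sm_S)$ recovers that constant sheaf, so $\gamma_*A_S\simeq A_S$ in $\H(S)$, and likewise $\gamma_*A_T\simeq A_T$ in $\H(T)$. By naturality of the $\flf$-extension in the base, $\gamma_*$ carries the adjoint unit $A_S\to f_*A_T$ defining our map to its unframed analogue, so the question reduces to showing that the canonical map $f^*A_S\to A_T$ between constant sheaves in $\H(T)$ is an equivalence. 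This is clear, since the constant sheaf with value $A$ is stable under arbitrary pullback.

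I expect no substantive obstacle; the only step requiring any care is the bookkeeping check that the $\flf$-extension of the constant sheaf is compatible with $\gamma_*$ and with base change, and this is immediate from the definitions.
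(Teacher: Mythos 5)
Your proof is correct and matches the paper's argument in all essentials: both use Lemma~\ref{lem:gamma-detect} (equivalently, \cite[Proposition 3.2.14]{EHKSY1}) to reduce to checking the equivalence after applying $\gamma_*$, then invoke Lemma~\ref{lem:gamma-BC} to commute $\gamma_*$ past $f^*$, and finally observe that the unframed comparison map between constant sheaves is a (Zariski-local, hence motivic) equivalence because constant sheaves are stable under base change. The paper phrases the last step by noting the diagonal map $f^*\gamma_*A_S\to\gamma_*A_T$ is a Zariski equivalence of presheaves, but this is the same observation you make after identifying $\gamma_*A_S\simeq A_S$ and $\gamma_*A_T\simeq A_T$.
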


\begin{proof}
	We consider the following commutative triangle in $\PSh_\Sigma(\Sm_T)$:
	\[
	\begin{tikzcd}
		f^*\gamma_* A_S \ar{d} \ar{dr} & \\
		\gamma_* f^* A_S \ar{r} & \gamma_* A_T.
	\end{tikzcd}
	\]
	The vertical map is a motivic equivalence by Lemma~\ref{lem:gamma-BC}, and the diagonal map is trivially a Zariski equivalence. Hence, the lower horizontal map is a motivic equivalence.
	Since $\gamma_*$ detects motivic equivalences (Lemma~\ref{lem:gamma-detect}), we are done.
\end{proof}

\begin{thm}\label{thm:HZ}
	Let $S$ be a scheme. Then there is an equivalence of motivic $\Einfty$-ring spectra $H\Z_S \simeq \gamma_* \Sigma^\infty_{\T,\fr}\Z_S$.
\end{thm}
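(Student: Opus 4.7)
The plan is to follow the same three-step pattern used in the proof of \thmref{thm:reconstruction}: establish base-change stability on both sides, reduce the problem to $S=\Spec\Z$, then reduce to residue fields via conservativity, and finally invoke the known field case.

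First I would check that the right-hand side $E_S:=\gamma_*\Sigma^\infty_{\T,\fr}\Z_S$ is stable under arbitrary base change. Given $f\colon T\to S$, one computes
\[
f^*E_S\simeq \gamma_*f^*\Sigma^\infty_{\T,\fr}\Z_S\simeq \gamma_*\Sigma^\infty_{\T,\fr}f^*\Z_S\simeq \gamma_*\Sigma^\infty_{\T,\fr}\Z_T=E_T,
\]
using (in order) \lemref{lem:gamma-BC} for $\SH$, the fact that $f^*$ is a symmetric monoidal left adjoint and hence commutes with $\Sigma^\infty_{\T,\fr}$, and \lemref{lem:Z-bc}. The left-hand side $H\Z_S$ is stable under base change by Spitzweck's construction. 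By naturality of these equivalences, it suffices to prove the theorem over $S=\Spec\Z$; the equivalence will then pull back to any scheme along its essentially unique structure map.

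Next I would produce a comparison map. Using \thmref{thm:reconstruction}, I view $H\Z_S$ as a framed motivic $\Einfty$-ring spectrum $\gamma^*H\Z_S\in\SHfr(S)$. The $\Einfty$-algebra $\Omega^\infty_{\T,\fr}\gamma^*H\Z_S$ in $\H^\fr(S)$ is grouplike, and its sheaf of connected components receives a canonical unit map from the constant sheaf $\Z_S$ of abelian groups. This corresponds by adjunction to a map $\Sigma^\infty_{\T,\fr}\Z_S\to\gamma^*H\Z_S$, which after applying $\gamma_*$ and using that $\gamma_*\gamma^*\simeq\id$ produces the desired natural map $E_S\to H\Z_S$ of motivic $\Einfty$-ring spectra.

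The final step is to show this map is an equivalence. By the non-framed version of \corref{cor:conservativity} applied to $\Spec\Z$, the pullback functor $\SH(\Spec\Z)\to \SH(\Spec\bQ)\times\prod_p\SH(\Spec\bF_p)$ is conservative. Combined with the base-change equivalences established in the first step, it suffices to verify the claim over $\Spec\bQ$ and over each $\Spec\bF_p$. These are all perfect fields, where $H\Z_k$ is Voevodsky's motivic cohomology spectrum and the equivalence $H\Z_k\simeq\gamma_*\Sigma^\infty_{\T,\fr}\Z_k$ is the main theorem of Garkusha--Panin on the framed presentation of motivic cohomology. The main obstacle I anticipate is the careful construction of the natural map in the second step at the $\Einfty$-ring level rather than merely on underlying motivic spectra: one must ensure that the map arising from the $\pi_0$-unit is canonically upgraded to a natural $\Einfty$-algebra map, so that fiberwise equivalences over the residue fields assemble into a global equivalence over $\Spec\Z$.
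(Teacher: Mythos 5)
Your overall strategy (base change, conservativity over the integers, reduce to fields) is close in spirit to the paper's proof, but there are two genuine gaps in the middle and final steps.

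The first and most serious gap is in the construction of the comparison map. You produce a map $\Z_S \to \pi_0^{\mathrm{Nis}}\Omega^\infty_{\T,\fr}\gamma^*H\Z_S$ from the fact that $\Z_S$ is the initial presheaf of commutative rings on $\Sm_S$. But $\Z_S$, as an object of $\H^\fr(S)$, carries a specific framed-transfer structure (induced from the finite locally free degree map), and the target's framed transfers are those inherited from $H\Z_S$ via \thmref{thm:reconstruction}. Initiality as a presheaf of rings on $\Sm_S$ gives you a map of $\Sm_S$-presheaves, not a map in $\PSh(\Span^\fr(\Sm_S))$. To obtain a map in $\H^\fr(S)$ (which is what you need before you can apply the adjunction $(\Sigma^\infty_{\T,\fr},\Omega^\infty_{\T,\fr})$), you must verify that this isomorphism of presheaves respects the framed transfers on both sides. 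The paper's proof reduces this to a Dedekind base (so that $\Omega^\infty_\T H\Z_S\simeq\Z_S$ as presheaves of rings), then reduces the comparison of transfers to framed correspondences between generic points, where it can be computed via [EHKSY1, Proposition 5.3.6]. You flag a concern about the $\Einfty$-upgrade at the end, but that is not the real obstacle; the issue is the compatibility with framed transfers, which your sketch does not address.

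The second gap is in the final step. You invoke Garkusha--Panin to conclude over $\bQ$ and $\bF_p$. Setting aside the nontrivial translation between Voevodsky-style framed correspondences and the $\infty$-categorical framework of [EHKSY1], the logical issue is that knowing \emph{some} equivalence $H\Z_k\simeq\gamma_*\Sigma^\infty_{\T,\fr}\Z_k$ exists over a perfect field does not show that \emph{your} map $\phi_k$ is that equivalence. The paper avoids this by arguing differently over a perfect field: by the recognition principle [EHKSY1, Theorem 3.5.13(i)], $\phi_k$ exhibits the right-hand side as the very effective cover of $H\Z_k$, and since $H\Z_k$ is already very effective [BH18, Lemma 13.6], $\phi_k$ must be an equivalence. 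You would need either this argument or a separate verification that $\phi_k$ agrees with the Garkusha--Panin comparison.
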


\begin{proof}
	By Lemmas \ref{lem:gamma-BC} and \ref{lem:Z-bc}, it suffices to prove this when $S$ is a Dedekind domain. In this case, there is an isomorphism of presheaves of commutative rings
	\[
	\Omega^\infty_\T H\Z_S \simeq \Z_S.
	\]
	We claim that this isomorphism is compatible with the framed transfers on either side, the ones on the left coming from Theorem~\ref{thm:reconstruction}.
	Since we are dealing with discrete constant sheaves, it suffices to compare the transfers for a framed correspondence of the form $\eta \leftarrow T\rightarrow \eta$ where $\eta$ is a generic point of a smooth $S$-scheme. Thus we may assume that $S$ is a field, in which case we can compute the framed transfers on $\Omega^\infty_\T H\Z_S$ using \cite[Proposition 5.3.6]{EHKSY1}, verifying the claim.
	 
	 By adjunction, we obtain a morphism of $\Einfty$-algebras
	\[
	\phi_S\colon \Sigma^\infty_{\T,\fr} \Z_S \to \gamma^*H\Z_S
	\]
	in $\SH^\fr(S)$. We show that $\phi_S$ is an equivalence. By construction, $\phi_S$ is functorial in $S$. By Corollary~\ref{cor:conservativity}(2), we may therefore assume that $S$ is the spectrum of a perfect field. In this case, the recognition principle \cite[Theorem 3.5.14(i)]{EHKSY1} implies that $\phi_S$ exhibits $\gamma_* \Sigma^\infty_{\T,\fr}\Z_S$ as the very effective cover of $H\Z_S$. Since $H\Z_S$ is already very effective \cite[Lemma 13.7]{norms}, we conclude that $\phi_S$ is an equivalence.
\end{proof}

If $S$ is a Dedekind domain, the motivic spectrum $H\Z_S\in\SH(S)$ lies in the heart of the effective homotopy $t$-structure \cite[Lemma 13.7]{norms}. It follows that it admits a unique structure of strictly commutative monoid in $\SH(S)$, in the sense of \cite[\sectsign 7]{HoyoisKunneth}. Hence, for any scheme $S$, $H\Z_S\in\SH(S)$ is a module over the Eilenberg–Mac Lane spectrum $\Z\in\Spt$. In particular, for any $A\in \Mod_{\Z}(\Spt)$, we can form the tensor product $HA_S=H\Z_S\otimes_{\Z} A$. This construction defines a symmetric monoidal functor
\[
\Mod_{\Z}(\Spt) \to \Mod_{H\Z_S}(\SH(S)), \quad A\mapsto HA_S.
\]
When $S$ is the spectrum of a field and $A$ is an abelian group, $HA_S$ is equivalent to Voevodsky's motivic Eilenberg–Mac Lane spectrum with coefficients in $A$.

\begin{cor}
	Let $S$ be a scheme and $A$ an abelian group (resp.\ a ring; a commutative ring). Then there is a canonical equivalence of $H\Z_S$-modules (resp.\ of $\sA_\infty$-$H\Z_S$-algebras; of $\Einfty$-$H\Z_S$-algebras) $HA_S \simeq \gamma_* \Sigma^\infty_{\T,\fr}A_S$.
\end{cor}

\begin{proof}
	By Lemmas \ref{lem:gamma-BC} and \ref{lem:Z-bc}, we may assume that $S$ is a Dedekind domain. Since the equivalence of Theorem~\ref{thm:HZ} takes place in the heart of the effective homotopy $t$-structure, it can be uniquely promoted to an equivalence of $\Einfty$-rings in strictly commutative monoids. Hence, for any $A\in \Mod_{\Z}(\Spt_{\geq 0})$, we obtain an equivalence $HA_S\simeq \gamma_* \Sigma^\infty_{\T,\fr}(\Z_S\otimes_{\Z}A)$. To conclude, note that $\Z_S\otimes_{\Z}A\simeq A_S$ when $A$ is discrete.
\end{proof}


\bibliographystyle{alphamod}

\let\mathbb=\mathbf

{\small
\bibliography{references}

\newcommand{\etalchar}[1]{$^{#1}$}
\providecommand{\bysame}{\leavevmode\hbox to3em{\hrulefill}\thinspace}
\providecommand{\MR}{\relax\ifhmode\unskip\space\fi MR }
\providecommand{\MRhref}[2]{%
  \href{http://www.ams.org/mathscinet-getitem?mr=#1}{#2}
}
\providecommand{\href}[2]{#2}
\begin{thebibliography}{EHK{\etalchar{+}}19b}
\providecommand{\url}[1]{\href{#1}{{\def~{\textasciitilde}\tt #1}}}

\bibitem[Ayo08]{Ayoub}
J.~Ayoub, \emph{Les six op{\'e}rations de Grothendieck et le formalisme des
  cycles {\'e}vanescents dans le monde motivique, I}, Ast{\'e}risque
  \textbf{315} (2008)

\bibitem[Ayo14]{AyoubEtale}
\bysame, \emph{La r{\'e}alisation {\'e}tale et les op{\'e}rations de
  Grothendieck}, Ann. Sci. {\'E}c. Norm. Sup{\'e}r. \textbf{47} (2014), no.~1,
  pp.~1--145

\bibitem[BH20]{norms}
T.~Bachmann and M.~Hoyois, \emph{Norms in motivic homotopy theory}, to appear
  in Ast{\'e}risque, 2020, \href{http://arxiv.org/abs/1711.03061}{{\sf
  arXiv:1711.03061}}

\bibitem[CD19]{CD}
D.-C. Cisinski and F.~D{\'e}glise, \emph{Triangulated categories of mixed
  motives}, Springer Monographs in Mathematics, Springer, 2019, preprint
  \href{http://arxiv.org/abs/0912.2110v4}{{\sf arXiv:0912.2110v4}}

\bibitem[CM19]{ClausenMathew}
D.~Clausen and A.~Mathew, \emph{Hyperdescent and {\'e}tale $K$-theory}, 2019,
  \href{http://arxiv.org/abs/1905.06611v2}{{\sf arXiv:1905.06611v2}}

\bibitem[DJK18]{DJK}
F.~D{\'e}glise, F.~Jin, and A.~A. Khan, \emph{Fundamental classes in motivic
  homotopy theory}, 2018, \href{http://arxiv.org/abs/1805.05920}{{\sf
  arXiv:1805.05920}}

\bibitem[EHK{\etalchar{+}}19a]{EHKSY2}
E.~Elmanto, M.~Hoyois, A.~A. Khan, V.~Sosnilo, and M.~Yakerson, \emph{Framed
  transfers and motivic fundamental classes}, 2019,
  \href{http://arxiv.org/abs/1809.10666v2}{{\sf arXiv:1809.10666v2}}

\bibitem[EHK{\etalchar{+}}19b]{EHKSY1}
\bysame, \emph{Motivic infinite loop spaces}, 2019,
  \href{http://arxiv.org/abs/1711.05248v5}{{\sf arXiv:1711.05248v5}}

\bibitem[Gro66]{EGA4-3}
A.~Grothendieck, \emph{{\'E}l{\'e}ments de {G}{\'e}om{\'e}trie
  {A}lg{\'e}brique: {IV.} {{\'E}}tude locale des sch{\'e}mas et des morphismes
  de sch{\'e}mas, {T}roisi{\`e}me partie}, Publ. Math. I.H.{\'E}.S. \textbf{28}
  (1966)

\bibitem[Gro67]{EGA4-4}
\bysame, \emph{{\'E}l{\'e}ments de {G}{\'e}om{\'e}trie {A}lg{\'e}brique: {IV.}
  {{\'E}}tude locale des sch{\'e}mas et des morphismes de sch{\'e}mas,
  {Q}uatri{\`e}me partie}, Publ. Math. I.H.{\'E}.S. \textbf{32} (1967)

\bibitem[Gru72]{Gruson}
L.~Gruson, \emph{Une propri{\'e}t{\'e} des couples hens{\'e}liens}, Colloque
  d'alg{\`e}bre commutative, exp.~n\textsuperscript{o}~10, Publications des
  s{\'e}minaires de math{\'e}matiques et informatique de Rennes, 1972

\bibitem[Hoy14]{HoyoisGLV}
M.~Hoyois, \emph{A quadratic refinement of the
  {G}rothendieck--{L}efschetz--{V}erdier trace formula}, Algebr. Geom. Topol.
  \textbf{14} (2014), no.~6, pp.~3603--3658

\bibitem[Hoy18]{HoyoisKunneth}
\bysame, \emph{The {\'e}tale symmetric K{\"u}nneth theorem}, 2018,
  \href{http://arxiv.org/abs/1810.00351v2}{{\sf arXiv:1810.00351v2}}

\bibitem[Lev01]{LevineLocalization}
M.~Levine, \emph{Techniques of localization in the theory of algebraic cycles},
  J. Algebraic Geom. \textbf{10} (2001), pp.~299--363

\bibitem[Lur17]{HA}
J.~Lurie, \emph{Higher Algebra}, September 2017,
  \url{http://www.math.harvard.edu/~lurie/papers/HA.pdf}

\bibitem[MV99]{MV}
F.~Morel and V.~Voevodsky, \emph{$\mathbb{A}^1$-homotopy theory of schemes},
  Publ. Math. I.H.{\'E}.S. \textbf{90} (1999), pp.~45--143, preprint
  \href{http://www.math.uiuc.edu/K-theory/0305}{{\sf K-theory:0305}}

\bibitem[Spi18]{SpitzweckHZ}
M.~Spitzweck, \emph{A commutative $\mathbb{P}^1$-spectrum representing motivic
  cohomology over {D}edekind domains}, M{\'e}m. Soc. Math. Fr. \textbf{157}
  (2018), preprint \href{http://arxiv.org/abs/1207.4078}{{\sf arXiv:1207.4078}}

\end{thebibliography}
}

\parskip 0pt

\end{document}